\documentclass[%
  a4paper,
  onecolumn,
  colorlinks,
]{preprint}

\usepackage[english]{babel}
\usepackage[T1]{fontenc}

\usepackage{amsmath}
\usepackage{amssymb}
\usepackage{amsthm}
\usepackage{mathtools}
\usepackage{stmaryrd}
\SetSymbolFont{stmry}{bold}{U}{stmry}{m}{n}
\usepackage{mathrsfs}

\usepackage{graphicx}
\usepackage{xcolor}
\usepackage{subcaption}
\usepackage{tikz}
\usepackage{pgfplots}
\usepackage{pgfplotstable}
\usetikzlibrary{external}

\usepackage[linesnumbered, lined, algoruled]{algorithm2e}

\usepackage{todonotes}
\makeatletter
\DeclareOldFontCommand{\rm}{\normalfont\rmfamily}{\mathrm}
\makeatother

\pgfplotsset{compat = 1.18}

\usepgfplotslibrary{external}

\tikzset{external/system call = {%
    pdflatex \tikzexternalcheckshellescape
    -halt-on-error
    -interaction=batchmode
    -jobname "\image" "\texsource"}}
\tikzexternaldisable

\theoremstyle{plain}

\theoremstyle{definition}

\newtheorem{definition}{Definition}
\newtheorem{proposition}{Proposition}
\newcommand{\R}{\ensuremath{\mathbb{R}}}

\newcommand{\norm}[1]{\left\|#1\right\|}
\newcommand{\range}{\operatorname{range}}
\newcommand{\Kry}{\mathcal{K}}          

\newcommand{\bA}{\ensuremath{\boldsymbol{A}}}

\newcommand{\bD}{\ensuremath{\boldsymbol{D}}}
\newcommand{\bE}{\ensuremath{\boldsymbol{E}}}
\newcommand{\bF}{\ensuremath{\boldsymbol{F}}}
\newcommand{\bG}{\ensuremath{\boldsymbol{G}}}

\newcommand{\bI}{\ensuremath{\boldsymbol{I}}}

\newcommand{\bL}{\ensuremath{\boldsymbol{L}}}

\newcommand{\bP}{\ensuremath{\boldsymbol{P}}}
\newcommand{\bQ}{\ensuremath{\boldsymbol{Q}}}
\newcommand{\bR}{\ensuremath{\boldsymbol{R}}}
\newcommand{\bS}{\ensuremath{\boldsymbol{S}}}

\newcommand{\bV}{\ensuremath{\boldsymbol{V}}}
\newcommand{\bW}{\ensuremath{\boldsymbol{W}}}

\newcommand{\bb}{\mathbf{b}}
\newcommand{\br}{\mathbf{r}}
\newcommand{\by}{\mathbf{y}}
\newcommand{\bx}{\mathbf{x}}

\newcommand{\bq}{\mathbf{q}}
\newcommand{\bd}{\mathbf{d}}

\newcommand{\bp}{\mathbf{p}}
\newcommand{\bv}{\mathbf{v}}
\newcommand{\bt}{\mathbf{t}}

\newcommand{\be}{\mathbf{e}}
\newcommand{\bz}{\mathbf{z}}

\newcommand{\vnew}{\bv_{\mathrm{new}}}

\DeclareMathOperator*{\argmin}{argmin}

\definecolor{matlabblue}{HTML}{0072BD}
\definecolor{matlabgreen}{HTML}{77AC30}
\definecolor{matlaborange}{HTML}{D95319}

\tikzset{Kron_Style_Bar/.style={
    matlabblue, solid, line width=2pt, mark=none, fill=matlabblue!100}}
\tikzset{KRP_Style_Bar/.style={
    matlaborange, solid, line width=2pt, mark=none,
    mark options={solid}, fill=matlaborange!100}}
\tikzset{Naive_Style_Bar/.style={
    matlabgreen, solid, line width=2pt, mark=none,
    mark options={solid}, fill=matlabgreen!100}}
\tikzset{Naive_Style_line/.style={matlabgreen, solid, line width=2pt}}
\tikzset{Kron_Style_line/.style={matlabblue,  solid, line width=2pt}}
\tikzset{KRP_Style_line/.style={matlaborange, solid, line width=2pt}}

\begin{document}

\title{A Sketched Generalized Krylov Subspace Method
       for Large-Scale Regularization}
\author[1]{\mbox{Davide Palitta}}
\author[2]{and \mbox{Mirjeta Pasha}}

\affil[1]{Dipartimento di Matematica and (AM)$^2$,
Alma Mater Studiorum Universit\`a di Bologna

 Piazza di Porta San Donato  5, I-40127 Bologna, Italy.\authorcr
\email{davide.palitta@unibo.it}, \orcid{0000-0002-6987-4430}}

\affil[2]{Department of Mathematics, Virginia Tech, Blacksburg, VA 24061, USA.\authorcr
       \email{mpasha@vt.edu}, \orcid{0000-0003-4249-2421}}

\shorttitle{Sketching for GKS}
\shortauthor{D. Palitta and M. Pasha}
\shortinstitute{}

\keywords{Tikhonov regularization, randomized sketching, generalized Krylov subspace, large-scale inverse problems, randomized numerical linear algebra}

\msc{65F10, 65F22, 65R32, 68W20} 

\abstract{
The generalized Krylov subspace (GKS) method is an effective projection
technique for large-scale Tikhonov regularization with a general
regularization matrix.
As the subspace expands, however, two computational bottlenecks limit
scalability: the thin QR factorizations of the tall projected matrices
formed by the forward operator and the regularization matrix applied to
the basis, and the full reorthogonalization of each new basis vector
against all previous columns.

We propose a sketched variant, named sGKS, that addresses both
bottlenecks.
The QR factorizations are performed on compressed matrices of much
smaller row dimension, maintained incrementally via rank-one updates.
Moreover, we observe that explicit reorthogonalization can be skipped
entirely without compromising the quality of the approximation
subspace, since no step of GKS relies intrinsically on the
orthogonality of the basis.
The resulting algorithm is independent of the choice of sketching
operator and preserves the approximation quality of the original
method: we show that, in the absence of sketching in the projected
solve, sGKS produces iterates identical to those of standard GKS,
and that the sketched projected solve delivers quasi-optimal residual
norms controlled by the embedding quality.
For more challenging problems where the loss of basis orthogonality
becomes significant, we show that incorporating a small number of
iterative refinement steps in the projected solve restores the
spectral properties of the basis and recovers the full accuracy of
the unsketched method.
Numerical experiments on image deblurring, X-ray computerized
tomography, seismic travel-time tomography, and dynamic computerized
tomography demonstrate that sGKS matches the reconstruction quality
of standard GKS while significantly reducing per-iteration costs and
overall wall-clock time.
}

\maketitle

\section{Introduction}\label{sec:intro}

We consider large-scale ill-posed least-squares problems of the form  \begin{equation}\label{eq:ls}              \min_{\bx \in \R^n} \norm{\bA\bx - \bb}^2_2, \end{equation}  where $\bA \in \R^{m \times n}$ is severely ill-conditioned, and $\bb \in \R^m$ is a noise-contaminated observation                  $\bb = \bb_{\mathrm{true}} + \be$ with $\norm{\be}_2$ small.                     These types of problems arise, e.g., from the discretization of ill-posed operator  equations~\cite{engl1996regularization,Kirsch2011}, including Fredholm integral      equations of the first kind~\cite{Hansen2010,Groetsch1984}, and tomographic imaging~\cite{Natterer2001,NattererWuebbeling2001}. 

It is well-known that, due to the ill-conditioning of $\bA$, minimizing~\eqref{eq:ls} directly yields a solution dominated by amplified noise; regularization is required to stabilize the computation. Many diverse strategies have been proposed during the years. A non-exhaustive list includes truncated singular value decomposition (TSVD)~\cite{Hansen1987}, iterative regularization methods such as CGLS and GMRES~\cite{Hansen1998,Calvetti2002}, hybrid projection methods that combine iterative projection with Tikhonov regularization~\cite{Chung2008,Gazzola2015,ChungGazzola2024}, $\ell_p$--$\ell_q$ regularization~\cite{Huang2017,Lanza2015}, total variation regularization~\cite{RudinOsherFatemi1992}, and learned or data-driven approaches~\cite{Arridge2019, chung2021efficient}. One of the most widely used regularization strategies is due to Tikhonov~\cite{Tikh63}, which replaces~\eqref{eq:ls} by the penalized problem \begin{equation}\label{eq:tik}
    \min_{\bx \in \R^n} \left\{ \norm{\bA\bx - \bb}_2^2 + \lambda\norm{\bL\bx}_2^2 \right\}, \end{equation} where $\bL \in \R^{p\times n}$ is a regularization matrix that encodes prior information about the expected solution, and $\lambda > 0$ is the regularization parameter.  
When $\bL$ is the identity operator (standard-form Tikhonov), efficient methods based on the partial
Lanczos bidiagonalization of $\bA$ are well established
\cite{PaigeSaunders82, Reichel03, CalvettiReichel03}.
For general $\bL$, however, a transformation to standard form via, e.g., the
$\bA$-weighted pseudoinverse $\bL^\dagger_A$~\cite{Elden84} is often impractical;
for instance, when $\bL$ is a discrete derivative operator on a two-dimensional
domain it takes the form of a sum of Kronecker products whose pseudoinverse is
dense and thus expensive to operate with in the large-scale setting.

In \cite{LRV12} the authors proposed the
generalized Krylov subspace (GKS) method that handles general $\bL$ directly
by expanding, at each iteration $k$, a search space $\mathcal V_k \subseteq \R^n$ using the gradient of the Tikhonov
functional.
This gradient explicitly depends on the regularization parameter $\lambda$ which can be updated at each iteration.
While effective, the GKS method inherits several computational bottlenecks
that become prohibitive as the problem and subspace dimensions grow.
In particular, if $\bV_k$ denotes a matrix whose columns represent an orthonormal basis of the current
$\mathcal V_k$, then the full orthogonalization of the newly acquired gradient with respect to $\bV_k$ is performed. Similarly, the
QR factorizations of the projected matrices $\bA\bV_k$ and $\bL\bV_k$ needed for the solution of the projected counterpart of~\eqref{eq:tik} have to be updated.

All these operations scale unfavorably with the
iteration count.
These costs motivate the use of randomized dimensionality reduction
techniques -- sketching -- to accelerate the most expensive operations
while preserving the approximation quality of the subspace.

The practical benefit of sketching is that inner products between
high-dimensional vectors in a subspace $\mathcal{V}\subset\R^n$ can be
replaced by inner products between their sketched counterparts of
dimension $s \ll n$, yielding substantial savings provided that the
sketching operator can be applied efficiently.
For instance, a Gaussian random matrix $\bS \in \R^{s \times n}$ can be used as sketching but requires $O(sn)$ operations per application.
More structured alternatives, such as the subsampled randomized
Hadamard transform (SRHT) or subsampled randomized trigonometric
transforms (SRTTs), reduce this cost to $O(n \log s)$ per
application~\cite{WOOLFEetal2008}; we refer to~\cite{Woodruff14} for a comprehensive
survey of sketching matrices and their computational trade-offs.
Since in our setting the subspace $\mathcal{V}_k$ is a subspace
that is built incrementally as the iteration progresses, the sketching
operator must be an \emph{oblivious} embedding that is drawn
independently of $\mathcal{V}_k$ and requires no a priori knowledge of
the subspace beyond an upper bound on its dimension.

In the recent literature, sketching has been used to reduce the cost of orthogonalization in Krylov methods for, e.g., linear systems~\cite{NakatsukasaTropp2024,BalabanovGrigori22}, matrix function evaluations~\cite{GuettelSchweitzer2022,Palittaetal2025}, and matrix and tensor equations~\cite{Palittaetal2025_bis,Buccietal2025}.
In the context of ill-posed problems, these techniques have been recently adopted also
 for Golub-Kahan-like methods by Chung and
Gazzola~\cite{ChungGazzola25}.

In this paper we employ sketching to enhance 
GKS applied to large-scale regularization problems.
Indeed, both the QR factorizations of $\bA\bV_k$ and $\bL\bV_k$, and the
reorthogonalization inner products $\bV_k^\top \br_k$ need to be computed at each iteration, making the GKS method a natural target
for the randomization strategies described above.
The present work improves the efficiency of the original GKS scheme by (i) noticing that working with an orthonormal basis is not of paramount importance and forcing orthogonality can thus be avoided, and (ii) applying randomized sketching to~\eqref{eq:tik},
extending the ideas of~\cite{ChungGazzola25} from the Golub-Kahan
bidiagonalization setting to the generalized Krylov subspace framework with a
general regularization matrix $\bL \in \R^{p \times n}$.

Here is a synopsis of the paper. In section~\ref{sec:background} we recall some background material. More precisely, we revisit the GKS method and its main computational challenges (section~\ref{sec:GKS}) and the concept of sketching operators (section~\ref{The sketching operator}). Section~\ref{sec:sketching_GKS} sees the main contribution of this work, namely the design of the sketched GKS method (sGKS). Some comments on this novel scheme are provided in section~\ref{sec:analysis}. A panel of diverse numerical results is illustrated in section~\ref{sec:numerics} whereas we draw some conclusions in section~\ref{sec:conclusions}.

\section{Background material}\label{sec:background}

\subsection{GKS}\label{sec:GKS}

The goal of the Generalized Krylov Subspace (GKS) method~\cite{LRV12} is to solve~\eqref{eq:tik} by projecting it onto a sequence of nested subspaces. In particular, GKS
builds a subspace
$\mathcal{V}_k = \range(\bV_k)$, where $\bV_k \in \R^{n \times d_k}$ with
$d_k = \ell + k $ has orthonormal columns, and solves a projected Tikhonov problem at each step. In this paper we assume that a suitable value of the regularization parameter
$\lambda > 0$ is available; its selection is discussed
in~\cite{LRV12} and is orthogonal to the algorithmic
contributions of this paper.

Given an initial subspace $\mathcal{V}_0$, usually selected as the Krylov subspace $\mathcal{V}_0=\Kry_\ell(\bA^\top\bA, \bA^\top\bb)$ for a small $\ell>0$, we look for an approximation to the exact solution $\bx$ in~\eqref{eq:ls} of the form $\bx_k= \bV_{k}\by_k$. The vector $\by_k\in\mathbb{R}^{d_k}$ is the solution of the projected variant of~\eqref{eq:tik}, i.e., 
\begin{equation}\label{eq:projected_withQR}
  \by_k =\arg\min_{\by\in\R^{d_k}}
  \left\|
  \begin{bmatrix}\bA\\
  \sqrt{\lambda}\bL\end{bmatrix}\bV_{k}\by
  -\begin{bmatrix}\bb\\\mathbf{0}\end{bmatrix}
  \right\|_2^2= \arg\min_{\by\in\R^{d_k}}
  \left\|
  \begin{bmatrix}\bR_A^{(k)}\\\sqrt{\lambda}\bR_L^{(k)}\end{bmatrix}\by
  -\begin{bmatrix}(\bQ_A^{(k)})^\top\bb\\\mathbf{0}\end{bmatrix}
  \right\|_2^2,
\end{equation}
where $\bA\bV_{k} = \bQ_A^{(k)}\bR_A^{(k)}$ and $\bL\bV_{k} = \bQ_L^{(k)}\bR_L^{(k)}$ are skinny QR factorizations.

The GKS method expands $\mathcal{V}_k=\text{range}(\bV_{k})$ at each iteration by appending the
(normalized, reorthogonalized) gradient of the Tikhonov functional, i.e.
\begin{equation}\label{eq: gradient}
  \br_{k+1} = \bA^\top(\bA\bV_{k}\by_k - \bb)
         + \lambda\bL^\top(\bL\bV_{k}\by_k).
\end{equation}
In exact arithmetic, $\br_{k+1}$ is orthogonal to $\mathcal{V}_k$. Indeed, a direct computation shows that $\bV_k^\top\br_{k+1}=0$ as this equation corresponds to the normal equations related to~\eqref{eq:projected_withQR}. However, in finite precision arithmetic such orthogonality may fall short to hold and is thus often explicitly imposed. 
In particular, the new column $\vnew$ of $\bV_{{k+1}}=[\bV_{k},\vnew]$
is obtained by orthonormalizing $\br_{k+1}$ against
the current $\bV_{k}$.
Since $\bV_k \in \R^{n \times d_k}$ has orthonormal columns,
$\bV_k^\top \bV_k = \bI_{d_k}$, the orthogonal projector onto
$\range(\bV_k)^\perp$ is
$\bP_k = \bI_n - \bV_k\bV_k^\top \in \R^{n \times n}$.
In exact arithmetic, $\bP_k^2 = \bP_k$ and $\bP_k^\top = \bP_k$.
A single application to the gradient $\br_{k+1}$ suffices to remove all components
in $\range(\bV_k)$:
\begin{equation}\label{eq:single_mgs}
  \bP_k\br_{k+1} = \br_{k+1} - \bV_k(\bV_k^\top\br_{k+1}).
\end{equation}
The subspace expansion then appends
$\vnew = \bP_k\br_{k+1} / \|\bP_k\br_{k+1}\|_2$,
which is exactly orthogonal to every column of $\bV_k$.

In floating-point arithmetic the computed basis satisfies
$\bV_k^\top\bV_k = \bI_{d_k} + \bE$, where
$\|\bE\| = \mathcal{O}(\kappa(\bV_k)\,\varepsilon_\mathrm{mach})$
and $\kappa(\bV_k)$ is the condition number of $\bV_k$~\cite{GiraudLangouRozloznik05}.
Round-off in the inner products accumulates over iterations, so
$\kappa(\bV_k)$ grows with $d_k$.
After one projection pass the residual orthogonality error satisfies
\begin{equation}\label{eq:single_residual}
  \bigl\|\bV_k^\top(\bP_k\br_{k+1})\bigr\|_2
  = \mathcal{O}\!\bigl(\kappa(\bV_k)\,\varepsilon_\mathrm{mach}\,\|\br_{k+1}\|_2\bigr),
\end{equation}
which can become significant in later iterations.
Applying the projection a second time reduces this to
$O(\kappa(\bV_k)^2\varepsilon_\mathrm{mach}^2\|\br_{k+1}\|_2)$---effectively
zero---while leaving the mathematical solution unchanged since $\bP_k^2 = \bP_k$
in exact arithmetic. The double application is therefore written as:
\[
   \mathbf{\widetilde v}_{\rm new} = (\bI_n - \bV_k\bV_k^\top)^2\br_{k+1},\qquad \vnew=\mathbf{\widetilde v}_{\rm new}/\|\mathbf{\widetilde v}_{\rm new}\|,
\]
so that $\bV_{k+1}=[\bV_k,\mathbf{v}_{\rm new}]$.
A common practical refinement avoids the second pass when unnecessary.
After the first projection, one can compute the loss ratio
\begin{equation}\label{eq:loss_criterion}
  \ell_{\mathrm{loss}} = \frac{\|\bV_k^\top(\bP_k\br_{k+1})\|_2}{\|\bP_k\br_{k+1}\|_2}.
\end{equation}
If $\ell_{\mathrm{loss}} < \tau$ for a threshold $\tau$ (e.g., $10^{-10}$),
the second pass is skipped; only when $\ell_{\mathrm{loss}} \geq \tau$ is the
correction applied.


Once the space is expanded, the solution of~\eqref{eq:projected_withQR} requires the computation of the skinny QR factorizations of 
$\bA\bV_{k+1}=[\bA\bV_{k},\bA\vnew] \in \R^{m \times d_{k+1}}$ and
$\bL\bV_{k+1}=[\bL\bV_{k},\bL\vnew] \in \R^{p \times d_{k+1}}$.
Rather than recomputing their thin QR factorizations from scratch, one can
\emph{update} the existing factorizations of $\bA\bV_{k}$ and $\bL\bV_{k}$ by appending one column---a rank-1
extension.
In particular, if $\bA\bV_{k+1} = \bQ_A^{(k+1)}\bR_A^{(k+1)}$ with $\bQ_A^{(k+1)}\in\R^{m\times d_{k+1}}$,
$\bR_A^{(k+1)}\in\R^{d_{k+1}\times d_{k+1}}$ are such that 
\begin{equation}\label{eq:updqr}
  \bQ_A^{(k+1)} = \bigl[\bQ_A^{(k)},\;\bq_\mathrm{new}\bigr],
  \qquad
  \bR_A^{(k+1)} = \begin{bmatrix}\bR_A^{(k)} & \bp_A \\ \mathbf{0}^\top & \rho_A\end{bmatrix}
,
\end{equation}
we compute the vectors $\bq_\mathrm{new}$, $\bp_A$, and the scalar $\rho_A$ as follows
\begin{align*}
  \bp_A &= (\bQ_A^{(k)})^\top \bA\vnew \in \R^{d_k}
  &&\text{(project onto existing } \bQ_A^{(k)}\text{)}
 \\
  \br^\perp_A &= \bA\vnew - \bQ_A^{(k)}\bp_A \in \R^m
  &&\text{(orthogonal residual)}
 \\
  \rho_A &= \|\br^\perp_A\|_2
  &&\text{(new diagonal entry)}\\
  \bq_\mathrm{new} &= \br^\perp_A / \rho_A \in \R^m
  &&\text{(new } \bQ_A^{(k+1)}\text{ column).}\\
\end{align*}
Once these quantities have been computed, we can cheaply update the right-hand side in~\eqref{eq:tik}. Indeed, if 
$\bd_A^{(k)}:=(\bQ_A^{(k)})^\top \bb$, then $(\bQ_A^{(k+1)})^\top\bb$ gains one new scalar entry:
\begin{equation}\label{eq:dA_update}
  \bd_A^{(k+1)} = \begin{bmatrix}\bd_A^{(k)} \\ \bq_\mathrm{new}^\top\bb\end{bmatrix}
  \in\R^{d_{k+1}},
\end{equation}
which costs $O(m)$ (a single dot product with the fixed vector $\bb$).
The identical update applies to $\bL\bV_{k+1} = \bQ_L^{(k+1)}\bR_L^{(k+1)}$.

In addition to having a maximum number of iteration $\mathtt{maxit}$, a common stopping criterion for GKS checks the distance between two consecutive approximations, namely $\|\bx_k-\bx_{k-1}\|_2$, and stops GKS as soon as this quantity becomes smaller than a specified threshold {\ttfamily tol}. Since $\bx_k=\bV_k\by_k$, and $\bV_k$ has orthonormal columns, it holds
\begin{equation}\label{eq:stoppingcriterion_cheap}
\|\bx_k-\bx_{k-1}\|_2=\|\bV_k(\by_k-[\by_{k-1};0])\|_2=\|\by_k-[\by_{k-1};0]\|_2,    
\end{equation}
which means that the stopping criterion can be implemented so that to manipulate only small dimensional objects.

The overall GKS process is summarized in Algorithm~\ref{alg:gks}.

\begin{algorithm}[t]
\DontPrintSemicolon\SetAlgoLined
\caption{GKS Tikhonov regularization \cite{LRV12}}\label{alg:gks}
\KwIn{$\bA \in \R^{m\times n}$, $\bb \in \R^m$, $\bL \in \R^{p\times n}$,
      initial basis $\bV_0$ with $\bV_0^\top\bV_0 = \bI$, regularization parameter $\lambda$, thresholds $\tau,\mathtt{tol}>0$, maximum number of iterations $\mathtt{maxit}$}

\For{$k = 0, 1, 2, \ldots, \mathtt{maxit}$}{
  Compute/update QR factorizations $\bA\bV_{k} = \bQ_A^{(k)}\bR_A^{(k)}$ and $\bL\bV_{k} = \bQ_L^{(k)}\bR_L^{(k)}$\;
  Solve~\eqref{eq:projected_withQR} to get $\by_{k}$\;
  \If{$k>0$ \rm{and} $\|\by_k-[\by_{k-1};0]\|_2\leq\mathtt{tol}$}{
Go to line~\ref{alg:finalline}\;
  }
  Compute the residual
    $\br_{k+1} $ as in~\eqref{eq: gradient}\;
Compute 
    $\widetilde{\bv}_{\rm new} = (\bI - \bV_k\bV_k^\top)\br_{k+1}$ and $\ell_{\rm loss}$ as in~\eqref{eq:loss_criterion}\;
    \If{$\ell_{\rm loss} \geq \tau$}{
Compute 
    $\widetilde{\bv}_{\rm new} = (\bI - \bV_k\bV_k^\top)\widetilde{\bv}_{\rm new}$\;
    }
  Normalize
    $\vnew = \widetilde{\bv}_{\rm new}/ \|\widetilde{\bv}_{\rm new}\|_2$ and set
    $\bV_{k+1} = [\bV_k,\;\vnew]$\;
}
\Return $\bx_{k} = \bV_k\by_{k}$\label{alg:finalline}
\end{algorithm}


\subsection{The sketching operator}\label{The sketching operator}
In this section we revise the main features of sketching operators. We point the interested reader to~\cite{Woodruff14} for a more thorough presentation.

\begin{definition}\label{def:sketch_op}
Given a $q$-dimensional subspace $\mathcal{V}\subset\mathbb{R}^n$, a linear map $\bS \in \R^{s \times n}$ is said to be a randomized $(\varepsilon,\delta,q)$-subspace embedding of $\mathcal{V}$
if and only if
\begin{equation}\label{eq:norm_equiv}
  (1 - \varepsilon)\,\|\bx\|_2^2
  \;\le\; \|\bS\bx\|_2^2
  \;\le\; (1 + \varepsilon)\,\|\bx\|_2^2,
  \quad \text{for all }\, \bx \in \mathcal{V},
\end{equation}
with failure probability $\delta$ (see, e.g.,~\cite[Definition 2.3]{BalabanovGrigori22}). 
\end{definition}
The chain of inequalities in~\eqref{eq:norm_equiv} implies that
\begin{equation}\label{eq:eps_embed}
  \bigl|\langle \bx, \by \rangle
  - \langle \bS\bx,\, \bS\by \rangle\bigr|
  \;\le\; \varepsilon\,\|\bx\|_2\,\|\by\|_2,
  \quad \text{for all }\, \bx, \by \in \mathcal{V},
\end{equation}
also holds with high probability. Similarly, we can define the $\bS^\top\bS$-norm as $\|\bx\|_{\bS^\top\bS}^2:=\bx^\top\bS^\top\bS\bx$ on $\mathcal{V}$. Indeed, the symmetric positive semidefinite matrix $\bS^\top\bS$ defines a proper positive definite inner product on $\mathcal{V}$ with probability at least $1-\delta$.

In principle, any (random) matrix satisfying Definition~\ref{def:sketch_op} can be used
for our algorithmic purposes; the method we design and the analysis that follows
are independent of the particular choice of $\bS$. On the other hand, the subspace $\mathcal{V}$ we want to embed is not known a-priori. This motivates the use of a particular class of subspace embedding, the so-called oblivious subspace embeddings, that can be constructed by solely relying on the dimension $q$ of $\mathcal{V}$ (or an upper bound thereof) and not the space itself.
Common choices in this class include Gaussian embeddings,
subsampled randomized Hadamard transforms (SRHT), and subsampled randomized trigonometric transforms (SRTT).  
For each of these types of embeddings, laws relating the sketching dimension $s$, the space dimension $q$, and the quality of the sketching encoded in $\varepsilon$ and $\delta$, can be found in the literature. For instance, if $\bS\in\mathbb{R}^{s\times n}$ is a Gaussian matrix, then choosing $s=\mathcal{O}(\varepsilon^{-2}\log q\log\frac{1}{\delta})$ would ensure~\eqref{eq:eps_embed}; see, e.g.,~\cite[Theorem 2]{Sarlos06}. Similarly, it was shown in \cite{Tropp11}
 that if $s=\mathcal{O}(\varepsilon^{-2}(k+\log\frac{n}{\delta})\log\frac{q}{\delta})$, then a SRTT is an oblivious $\varepsilon$-subspace
embedding for any $q$-dimensional subspace of $\mathbb{R}^n$.
Nonetheless, numerical evidence suggests that selecting the
smaller sketching dimension 
$s=\mathcal{O}(\varepsilon^{-2}\frac{q}{\delta})$ works well in 
practice; see, e.g.,~\cite[Section~9]{HMT11}.

In all our numerical experiments in section~\ref{sec:numerics} we always adopt a SRTT as sketching. In particular, we choose
\begin{equation}\label{eq:sketching_trig}
\bS=  \sqrt{\frac{n}{s}}\bD\bF\bE\in\mathbb{R}^{s\times n},
\end{equation}
where $\bE\in\mathbb{R}^{n\times n}$ is a diagonal matrix with Rademacher entries
(i.e., the diagonal entries are randomly chosen as $\pm 1$ with equal
probability), $\bD\in\mathbb{R}^{s\times n}$
 contains $s$ randomly selected rows of
the identity matrix, and $\bF\in\mathbb{R}^{n\times n}$ is the discrete cosine transform.
The selection of SRTTs is due to their low cost. Indeed, in~\cite{WOOLFEetal2008} it has been shown how
 performing $\bS \bx$ costs only $\mathcal{O}(n\log s)$
 floating point operations (flops) if the action of $\bS$ is
judiciously implemented. On the other hand, we already mention here that, due to our rather basic implementation, the cost of applying the matrix $\bS$ in~\eqref{eq:sketching_trig} to an $n$-dimensional vector is still  $\mathcal O(n \log n)$ flops.

\section{Sketched GKS (sGKS)}\label{sec:sketching_GKS}

In this section we now develop the sketched version of GKS.
The central idea is to reduce the cost of the most expensive steps in GKS (basis orthogonalization and solution of~\eqref{eq:tik})
with cheaper variants, involving
randomized approximations that preserve the the essential properties of the underlying problem while maintaining sufficient accuracy.
To this end, we adopt \emph{two distinct} sketching operators: $\bS_r \in\mathbb{R}^{s_r\times h_r}$ where the role of $r\in\{A,L\}$ will become clear shortly whereas the value of $h_r$ will be dictated by the dimension of the objects we want to sketch.

We start by focusing on the orthogonalization of the basis. As mentioned in section~\ref{sec:GKS}, the basis computed by GKS is orthogonal in exact arithmetic.
However, orthogonality in finite precision arithmetic is enforced by running, e.g., a Gram-Schmidt step. We propose to skip such explicit orthogonalization. Indeed, no step of GKS makes actual use of the orthogonality of the basis. In particular, an explicit update of the QR factorizations involved in~\eqref{eq:tik} is performed. This is in contrast to what is done in, e.g., Krylov methods for linear systems where a (Petrov-) Galerkin condition on the residual is implicitly imposed by formulating the projected problems in terms of quantities stemming from the orthogonalization of the basis; this is the role of the celebrated Arnoldi relation, see, e.g.,~\cite{Saad2003}. This equivalency requires the basis to be orthogonal. Similarly, the residual norm can be often cheaply computed by relying, once again, on the orthogonality of the basis. None of these steps is part of GKS. Moreover, also in the context of the solution of linear systems it has been shown how the orthogonality of the basis is not paramount. The most important aspect is that the adopted approximation space keeps growing as the iterations proceed, namely the newly computed basis vector encodes new directions not belonging to the already constructed subspace; see, e.g.,~\cite{SimonciniSzyld2005}.

 Our sketched GKS scheme still looks for an approximate solution $\bx_k$ to \eqref{eq:ls} of the form $\bx_k=\bW_k\by_k$ where, however, the $d_k$ columns of $\bW_k$ are not orthonormal anymore. In particular, given an initial basis ${\bW}_0\in\mathbb{R}^{n\times \ell}$, which plays the same role as $\bV_0$ in section~\ref{sec:GKS}, we define
 ${\bW}_k=[{\bW}_0,\br_1,\ldots,\br_k]\in\mathbb{R}^{n\times d_k}$. The gradient of the Tikhonov functional $\br_k$ are computed as in~\eqref{eq: gradient} where, however, the vectors $\by_k$ do not solve~\eqref{eq:projected_withQR} anymore but an approximation thereof, as shown below.
 
The second step of our procedure is the solution of the least-squares problem~\eqref{eq:tik}. In our setting it becomes
\begin{equation}\label{eq:tik_new}
  \min_{\bx=\bW_k\by \in \R^n} \left\{ \norm{\bA\bx - \bb}_2^2 + \lambda\norm{\bL\bx}_2^2 \right\}=\min_{\by \in \R^{d_k}} \left\{ \norm{\bA\bW_k\by - \bb}_2^2 + \lambda\norm{\bL\bW_k\by}_2^2 \right\}.
\end{equation}

To reduce its computational cost, we employ a sketch-and-solve approach; see, e.g.,~\cite{Sarlos06, Blendenpik}. In particular, given two sketching matrices $\bS_A\in\mathbb{R}^{s_A\times n}$ and $\bS_L\in\mathbb{R}^{s_L\times p}$ consisting of two subspace embeddings for $\text{range}(\bA\bW_{k})$ and $\text{range}(\bL\bW_{k})$, respectively, we then solve 
\begin{equation}\label{eq:tik_new_sketched}
 \min_{\by \in \R^{d_k}} \left\{ \norm{\bS_A(\bA\bW_k\by - \bb)}_2^2 + \lambda\norm{\bS_L\bL\bW_k\by}_2^2 \right\},
\end{equation}
i.e., 
\begin{align}\label{eq:projected_sketched_bis}
  \by_k =&\arg\min_{\by\in\R^{d_k}}
  \left\|
 \begin{bmatrix}\bS_A\bA{\bW}_{k}\\
  \sqrt{\lambda}\bS_L\bL{\bW}_{k}\end{bmatrix}\by
  -\begin{bmatrix}\bS_A\bb\\\mathbf{0}\end{bmatrix}
  \right\|_2^2 =\arg\min_{\by\in\R^{d_k}}
  \left\|
 \begin{bmatrix}\bR_A^{(k)}\\
  \sqrt{\lambda}\bR_L^{(k)}\end{bmatrix}\by
  -\begin{bmatrix}(\bQ_A^{(k)})^\top\bS_A\bb\\\mathbf{0}\end{bmatrix}
  \right\|_2^2.
\end{align}

In~\eqref{eq:projected_sketched_bis}, we employ the QR factorizations $\bQ_A^{(k)}\bR_A^{(k)}=\bS_A\bA{\bW}_{k}=[\bS_A\bA{\bW}_{k-1},\bS_A\bA\br_{k}]$, $\bQ_L^{(k)}\bR_L^{(k)}=\bS_L\bL{\bW}_{k}=[\bS_L\bL{\bW}_{k-1},\bS_L\bL\br_{k}]$ that can be easily updated as shown in section~\ref{sec:background} for the standard GKS procedure.

In Algorithm~\ref{alg:gks_sk} we report the overall sketched GKS procedure. Notice that, due to the non orthogonality of $\bW_k$, $\|\bx_k-\bx_{k-1}\|_2$ cannot longer be recast in terms of small dimensional objects only. However, computing $\bx_k$ and $\bx_{k-1}$ does not increase the overall computational cost of the methods as these vectors are required to define the gradients $\br_{k}$ and $\br_{k-1}$, respectively.

\begin{algorithm}[t]
\DontPrintSemicolon\SetAlgoLined
\caption{Sketched GKS Tikhonov regularization\label{alg:gks_sk}}
\KwIn{$\bA \in \R^{m\times n}$, $\bb \in \R^m$, $\bL \in \R^{p\times n}$, sketchings $\bS_A$, and $\bS_L$,
      initial basis ${\bW}_0$, regularization parameter $\lambda$, thresholds $\mathtt{tol}>0$, maximum number of iterations $\mathtt{maxit}$}
\For{$k = 0, 1, 2, \ldots, \mathtt{maxit}$}{
  Compute/update QR factorizations $\bS_A\bA\bW_{k} = \bQ_A^{(k)}\bR_A^{(k)}$ and $\bS_L\bL\bW_{k} = \bQ_L^{(k)}\bR_L^{(k)}$\;
  Solve~\eqref{eq:projected_sketched_bis} to get $\by_{k}$\;
  \If{$k>0$ \rm{and} $\|\bW_k(\by_k-[\by_{k-1};0])\|_2\leq\mathtt{tol}$}{
Go to line~\ref{alg:finalline_sk}\;}
Compute the residual
$\br_{k+1} $ as in~\eqref{eq: gradient} and set ${\bW}_{k+1}=[{\bW}_{k},\br_{k+1}]$\;
}
\Return $\bx_{k} = {\bW}_k\by_{k}$\label{alg:finalline_sk}\;
\end{algorithm}

\subsection{Some considerations on sGKS}\label{sec:analysis}

In this section we would like to report some considerations on our novel numerical procedure, illustrating the similarities and differences between sGKS and its standard counterpart GKS.

For the time being, assume that we do not employ $\bS_A$ and $\bS_L$, i.e., the vector $\by_k$ providing our approximate solution $\bx_k=\bW_k\by_k$ is such that
\begin{equation}\label{eq:sGKS_noSASL}
     \by_k=\argmin_{\by \in \R^{d_k}} \left\{ \norm{\bA\bW_k\by - \bb}_2^2 + \lambda\norm{\bL\bW_k\by}_2^2 \right\}.
     \end{equation}
In this case, our sGKS would be equivalent to the standard GKS procedure, provided $\text{range}({\bW}_0)=\text{range}(\bV_0)$. Indeed, in this case, it holds $\text{range}(\bW_k)=\text{range}(\bV_k)$ for any $k>0$ as shown in the following proposition.

\begin{proposition}\label{Prop_equivalent}
Let $\bx_k^{\rm GKS}=\bV_k\by_k^{\rm GKS}$ be the solution obtained by running $k$ iterations of GKS (Algorithm~\ref{alg:gks}). Similarly, let $\bx_k^{\rm sGKS_0}=\bW_k\by_k^{\rm sGKS_0}$ be the solution obtained by running $k$ iterations of sGKS (Algorithm~\ref{alg:gks_sk}), where $\by_k^{\rm sGKS_0}$ satisfies~\eqref{eq:sGKS_noSASL} in place of solving~\eqref{eq:tik_new_sketched}. Moreover, assume that $\text{range}({\bW}_0)=\text{range}(\bV_0)$.
Then, $\bx_k^{\rm GKS}=\bx_k^{\rm sGKS_0}$.
\end{proposition}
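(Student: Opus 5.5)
The plan is an induction on $k$ with the joint hypothesis
\[
\range(\bW_k)=\range(\bV_k)\quad\text{and}\quad \bx_k^{\rm GKS}=\bx_k^{\rm sGKS_0}.
\]
Throughout I work in exact arithmetic, so the reorthogonalization in Algorithm~\ref{alg:gks} is just the orthogonal projection $\bP_k$ (applied once or twice, which is the same since $\bP_k^2=\bP_k$). I also assume $\bA^\top\bA+\lambda\bL^\top\bL$ is nonsingular on the relevant subspaces, i.e.\ $\mathcal N(\bA)\cap\mathcal N(\bL)=\{0\}$. Then both projected problems have unique minimizers in $\bx$, although $\by$ may be non-unique if $\bW_k$ is rank deficient.

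\textbf{Key observation.} Both methods compute
\[
\bx_k=\arg\min_{\bx\in\mathcal V}\left\{\norm{\bA\bx-\bb}_2^2+\lambda\norm{\bL\bx}_2^2\right\},
\]
with $\mathcal V=\range(\bV_k)$ for GKS and $\mathcal V=\range(\bW_k)$ for sGKS$_0$. Equation~\eqref{eq:projected_withQR} and~\eqref{eq:sGKS_noSASL} are reparametrizations $\bx=\bV_k\by$ and $\bx=\bW_k\by$ of this problem. The QR factorizations are only an algebraically equivalent way of solving the normal equations. The Tikhonov functional is strictly convex on $\mathcal V$ under the assumption above, so $\bx_k$ depends only on the subspace $\mathcal V$, not on the chosen basis. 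Hence equal ranges imply equal iterates.

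\textbf{Base case and inductive step.} For $k=0$, the ranges agree by assumption, so $\bx_0^{\rm GKS}=\bx_0^{\rm sGKS_0}$.

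Now assume the hypothesis holds at step $k$. The gradient~\eqref{eq: gradient} can be written as
\[
\br_{k+1}=\bA^\top(\bA\bx_k-\bb)+\lambda\bL^\top\bL\bx_k .
\]
It therefore depends only on $\bx_k$, so both algorithms produce the same vector $\br_{k+1}$. GKS appends $\vnew$, which is a nonzero multiple of
\[
\bP_k\br_{k+1}=\br_{k+1}-\bV_k(\bV_k^\top\br_{k+1}).
\]
This gives
\[
\range(\bV_{k+1})=\range(\bV_k)+\mathrm{span}\{\br_{k+1}\}.
\]
sGKS$_0$ appends $\br_{k+1}$ itself, so $\range(\bW_{k+1})=\range(\bW_k)+\mathrm{span}\{\br_{k+1}\}$. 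The induction hypothesis then gives equal ranges at step $k+1$, and the key observation gives equal iterates. In fact $\bV_k^\top\br_{k+1}=0$ by the normal equations, so $\bP_k\br_{k+1}=\br_{k+1}$ and the new GKS vector is literally $\br_{k+1}/\|\br_{k+1}\|_2$.

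\textbf{Main obstacle: degenerate cases.} The subtle point is the case $\br_{k+1}=0$, where GKS cannot normalize. This happens exactly when $\bx_k$ is the global Tikhonov minimizer. Then both methods have already converged to the same point, and GKS breaks down. I would state that the proposition is meant up to this breakdown, or note that any subsequent iterates coincide with the global minimizer anyway.

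The same reasoning covers a possibly rank-deficient $\bW_k$ from repeated directions: uniqueness of $\bx_k$, not of $\by_k$, is what matters. Finally, the stopping tests coincide, since $\|\by_k-[\by_{k-1};0]\|_2=\|\bx_k-\bx_{k-1}\|_2$ for orthonormal $\bV_k$. So both algorithms also stop at the same $k$.
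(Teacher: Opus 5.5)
Your proof is correct and follows the same induction as the paper: equal ranges give equal iterates, equal iterates give the same gradient $\br_{k+1}$, and the same gradient gives equal ranges at the next step. Where the paper obtains equal iterates through the explicit change of variables $\bt=\bG_0\by$ with $\bW_0=\bV_0\bG_0$, you use the basis-independence of the subspace-constrained minimizer, and you also make explicit several points the paper leaves implicit: uniqueness via $\mathcal N(\bA)\cap\mathcal N(\bL)=\{0\}$, possible rank deficiency of $\bW_k$, breakdown when $\br_{k+1}=0$, and the identity $\range(\bV_{k+1})=\range(\bV_k)+\mathrm{span}\{\br_{k+1}\}$.
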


\begin{proof}
Since $\text{range}({\bW}_0)=\text{range}(\bV_0)$ by assumption, there exists a nonsingular matrix $\bG_0\in\mathbb{R}^{\ell \times\ell}$ such that $\bW_0=\bV_0\bG_0$. We can thus write 
\begin{align*}
\by_0^{\rm sGKS_0}=&\,\argmin_{\by \in \R^{d_0}} \left\{ \norm{\bA\bW_0\by - \bb}_2^2 + \lambda\norm{\bL\bW_0\by}_2^2 \right\}\\
=&\,\argmin_{\by \in \R^{d_0}} \left\{ \norm{\bA\bV_0\bG_0\by - \bb}_2^2 + \lambda\norm{\bL\bV_0\bG_0\by}_2^2 \right\}.
\end{align*}
By performing the change of variable $\bt:=\bG_0\by$, the problem becomes
$$\min_{\bt \in \R^{d_0}} \left\{ \norm{\bA\bV_0\bt - \bb}_2^2 + \lambda\norm{\bL\bV_0\bt}_2^2 \right\},$$
whose solution is $\by_0^{\rm GKS}$. This means that $\by_0^{\rm GKS}=\bG_0\by_0^{\rm sGKS_0}$. Therefore
$$\bx_0^{\rm sGKS_0}=\bW_0\by_0^{\rm sGKS_0}=\bV_0\bG_0\by_0^{\rm sGKS_0}=\bV_0\by_0^{\rm GKS}=\bx_0^{\rm GKS}.$$
Having the same approximate solution at the first step implies that also the first gradient of the Tikhonov functional computed by the two methods, namely $\br_1$, will be the same. Therefore, $\text{range}(\bW_1)=\text{range}(\bV_1)$ and the proof can be completed by induction.
\end{proof}

The key ingredient in the proof of Proposition~\ref{Prop_equivalent} is the solution of~\eqref{eq:sGKS_noSASL} in place of~\eqref{eq:tik_new_sketched}. As soon as we introduce the sketching $\bS_A$ and $\bS_L$, the equivalency between the two methods -- GKS and sGKS -- no longer holds. 
Indeed, if $\by_k^{\rm sGKS}$ denotes the solution to~\eqref{eq:tik_new_sketched}, we can show that the gradients
$$\br_{k+1}^{\rm sGKS}=\bA^\top(\bA\bW_{k}\by_k^{\rm sGKS} - \bb)
         + \lambda\bL^\top(\bL\bW_{k}\by_k^{\rm sGKS}),$$
and 
$$\br_{k+1}^{\rm sGKS_0}=\bA^\top(\bA\bW_{k}\by_k^{\rm sGKS_0} - \bb)
         + \lambda\bL^\top(\bL\bW_{k}\by_k^{\rm sGKS_0}),$$
 belong to the same space, namely, 
$\text{range}((\bA^\top\bA+\lambda\bL^\top\bL)\bW_{k}         )+\bA^\top \bb$, but we cannot say that 
$\text{range}([\bW_k,\br_{k+1}^{\rm sGKS}])= \text{range}([\bW_k,\br_{k+1}^{\rm sGKS_0}])$, even though this is often the case in practice; see Experiment 4 in section~\ref{sec:numerics}. 
The main goal when constructing 
$\br_{k+1}^{\rm sGKS}$ is that this vector provides new insightful information compared to those already encoded in $\bW_k$.
This would certainly be the case if, e.g., $\bW_k^\top \br_{k+1}^{\rm sGKS}=0$. However, this orthogonality does not hold, in general. Indeed, $\bW_k^\top \br_{k+1}^{\rm sGKS}$ does not amount to the normal equations formulation of~\eqref{eq:tik_new_sketched}. The latter would be 
$$\bW_k^\top\bA^\top\bS_A^\top \bS_A(\bA\bW_k\by-\bb)+\lambda\bW_k^\top\bL^\top \bS_L^\top \bS_L\bW_k\by.$$
This formulation may induce one to enlarge the current subspace with the vector 
$\bz_{k+1}=\bA^\top\bS_A^\top \bS_A(\bA\bW_k\by_k^{\rm sGKS}-\bb)+\lambda\bL^\top \bS_L^\top \bS_L\bW_k\by_k^{\rm sGKS}$ in place of $\br_{k+1}^{\rm sGKS}$. While this selection ensures the orthogonality of $[\bW_k,\bz_{k+1}]$, it also produced poor approximate solutions in our numerical experiments.
Indeed, $\bz_{k+1}$ corresponds to the gradient of the sketched Tikhonov functional, namely using it would be equivalent to applying the standard GKS method to 
$$  \min_{\bx \in \R^n} \left\{ \norm{\bS_A\bA\bx - \bb}_2^2 + \lambda\norm{\bS_L\bL\bx}_2^2 \right\}.$$
We thus recommend the use of $\br_{k+1}^{\rm sGKS}$ as new basis vector, possibly sacrificing the orthogonality of the basis. 

A possible alternative to retrieve the orthogonality of the basis would be to try to reduce the distance between $\by_k^{\rm sGKS}$ and $\by_k^{\rm sGKS_0}$. Indeed, since $\bW_k^\top\br_{k+1}^{\rm sGKS_0}=0$, having a small $\|\by_k^{\rm sGKS}-\by_k^{\rm sGKS_0}\|_2$ could potentially lead to having 
$\bW_k^\top\br_{k+1}^{\rm sGKS}\approx 0$.
 
While it is well-known that sketching techniques for least squares problems produce solutions that are quasi-optimal in terms of the residual norm of the original problem -- see Proposition~\ref{Prop_res_sketching} below -- they often lead to poor solutions in terms of error, namely $\|\by_k^{\rm sGKS}-\by_k^{\rm sGKS_0}\|_2$ may be large. However,
 Algorithm~\ref{alg:gks_sk} can be enhanced with some steps of an iterative refinement scheme similar to those proposed in, e.g.,~\cite{Iterative_refinement_sketching}, to reduce $\|\by_k^{\rm sGKS}-\by_k^{\rm sGKS_0}\|_2$. In section~\ref{sec:numerics} we will illustrate the impact of this approach on both the properties of the constructed basis and the computational performance of the overall solution process.

We conclude this section by showing that 
$\by_k^{\rm sGKS}$ produces quasi-optimal residual norms.

\begin{proposition}\label{Prop_res_sketching}
Let $\bS_A$ be an $\varepsilon_A$-subspace embedding of $\text{range}(\bA\bW_k)$. Similarly, let $\bS_L$ be an $\varepsilon_L$-subspace embedding of $\text{range}(\bL\bW_k)$. Then,
$$\norm{\bA\bW_k\by_k^{\rm sGKS} - \bb}_2^2 + \lambda\norm{\bL\bW_k\by_k^{\rm sGKS}}_2^2\leq \frac{1+\gamma}{1-\theta}\bigg(\norm{\bA\bW_k\by_k^{\rm sGKS_0} - \bb}_2^2 +\lambda\norm{\bL\bW_k\by_k^{\rm sGKS_0}}_2^2\bigg),$$
where  $\theta:=\min\{\varepsilon_A,\varepsilon_L\}$ and $\gamma:=\max\{\varepsilon_A,\varepsilon_L\}$.
\end{proposition}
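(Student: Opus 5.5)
The plan is the classical sketch-and-solve argument: compare the two objective functionals through the embedding inequalities \eqref{eq:norm_equiv}. Set
\[
f(\by):=\norm{\bA\bW_k\by-\bb}_2^2+\lambda\norm{\bL\bW_k\by}_2^2,\qquad
f_S(\by):=\norm{\bS_A(\bA\bW_k\by-\bb)}_2^2+\lambda\norm{\bS_L\bL\bW_k\by}_2^2 .
\]
By definition, $\by_k^{\rm sGKS}$ minimizes $f_S$, as in \eqref{eq:tik_new_sketched}. Likewise $\by_k^{\rm sGKS_0}$ minimizes $f$, as in \eqref{eq:sGKS_noSASL}.

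The first step is to make sure the embedding property applies to the residual vectors $\bA\bW_k\by-\bb$. These vectors lie in $\range(\bA\bW_k)+\mathrm{span}\{\bb\}$, not in $\range(\bA\bW_k)$ alone. I would therefore read the hypothesis on $\bS_A$ as an embedding of the augmented space $\range([\bA\bW_k,\bb])$, which has dimension at most $d_k+1$. This is the standard convention in sketch-and-solve analyses, and it costs nothing for an oblivious embedding. The vectors $\bL\bW_k\by$ lie in $\range(\bL\bW_k)$ directly, so no adjustment is needed for $\bS_L$.

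With that in place, \eqref{eq:norm_equiv} applied termwise gives, for every $\by$,
\[
\min\{1-\varepsilon_A,1-\varepsilon_L\}\,f(\by)\;\le\; f_S(\by)\;\le\;\max\{1+\varepsilon_A,1+\varepsilon_L\}\,f(\by).
\]
Here one uses $\lambda>0$ so that the two terms can be scaled separately. The upper constant is $1+\gamma$. The chain of inequalities is then
\[
f(\by_k^{\rm sGKS})\le \frac{f_S(\by_k^{\rm sGKS})}{\min\{1-\varepsilon_A,1-\varepsilon_L\}}\le \frac{f_S(\by_k^{\rm sGKS_0})}{\min\{1-\varepsilon_A,1-\varepsilon_L\}}\le \frac{1+\gamma}{\min\{1-\varepsilon_A,1-\varepsilon_L\}}\,f(\by_k^{\rm sGKS_0}).
\]
The middle inequality is exactly the optimality of $\by_k^{\rm sGKS}$ for $f_S$.

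The main obstacle is the constant in the denominator. The termwise bound naturally yields $\min\{1-\varepsilon_A,1-\varepsilon_L\}=1-\gamma$, not $1-\theta$. The two agree when $\varepsilon_A=\varepsilon_L$, and $1-\gamma$ is in any case the honest worst-case factor. The reason is that the lower bound must hold for both terms simultaneously, and the term with the larger distortion dominates.

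I would try first to see whether something sharper holds on the specific pair of minimizers. For instance, the sketched residual might split favourably between the two terms. If that fails, I would present the bound with $(1+\gamma)/(1-\gamma)$, which is still quasi-optimal as claimed. I would remark that the stated form with $\theta$ follows under the additional assumption $\varepsilon_A=\varepsilon_L$, in which case $\theta=\gamma$.
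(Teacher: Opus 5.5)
You follow the same three-step chain as the paper's proof: the lower embedding bound, then optimality of $\by_k^{\rm sGKS}$ for $f_S$, then the upper embedding bound. Both of your reservations are correct, and they are defects of the paper's argument, not of yours. Writing $a,b$ for the two sketched squared norms, the paper bounds $\frac{1}{1-\varepsilon_A}a+\frac{\lambda}{1-\varepsilon_L}b$ by $\frac{1}{1-\theta}(a+\lambda b)$ with $\theta=\min\{\varepsilon_A,\varepsilon_L\}$. This is invalid because $\varepsilon\mapsto 1/(1-\varepsilon)$ is increasing. The paper also applies \eqref{eq:norm_equiv} to $\bS_A(\bA\bW_k\by-\bb)$ while assuming an embedding of $\range(\bA\bW_k)$ only. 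What you actually prove, the constant $(1+\gamma)/(1-\gamma)$ under the augmented hypothesis, is the correct output of this route.

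Do not try to recover $\theta$ in general, because the stated inequality is false. Take the following data:
\begin{itemize}
\item $d_k=1$, $\bA\bW_k=\mathbf{a}$ with $\norm{\mathbf{a}}_2=1$, $\bb=\mathbf{a}$, and $\lambda\norm{\bL\bW_k}_2^2=10$;
\item $\bS_A=\bI$, so $\varepsilon_A=0$, even on $\range([\bA\bW_k,\bb])$;
\item $\bS_L=\sqrt{0.1}\,\bI$, so $\varepsilon_L=0.9$.
\end{itemize}
Then $f(y)=(y-1)^2+10y^2$ and $f_S(y)=(y-1)^2+y^2$. Hence $y_k^{\rm sGKS_0}=1/11$, $y_k^{\rm sGKS}=1/2$, and $f(1/2)/f(1/11)=121/40>1.9=(1+\gamma)/(1-\theta)$.

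The augmented hypothesis is likewise necessary, not merely conventional. Take $\bA\bW_k=\mathbf{e}_1\in\R^2$, $\bb=\mathbf{e}_2$, $\bS_A=\begin{bmatrix}1&1\end{bmatrix}$ (an exact isometry on $\range(\bA\bW_k)$), $\bS_L=\bI$, and $\mu=\lambda\norm{\bL\bW_k}_2^2$. Then $y_k^{\rm sGKS_0}=0$ and $y_k^{\rm sGKS}=1/(1+\mu)$, giving the ratio $1+1/(1+\mu)>1=(1+\gamma)/(1-\theta)$.

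Your hope for something sharper is justified, and it rescues the statement for small distortions. Write the problem as $\min_{\by}\norm{\mathbf{M}\by-\mathbf{c}}_2^2$ with $\mathbf{M}=[\bA\bW_k;\sqrt{\lambda}\,\bL\bW_k]$ and $\mathbf{c}=[\bb;\mathbf{0}]$. Under the augmented hypothesis, $\widehat{\bS}=\mathrm{blkdiag}(\bS_A,\bS_L)$ is a $\gamma$-embedding of $\range([\mathbf{M},\mathbf{c}])$. Let $\mathbf{u}=\mathbf{c}-\mathbf{M}\by_k^{\rm sGKS_0}$, which is orthogonal to $\range(\mathbf{M})$, and let $\mathbf{g}=\mathbf{M}(\by_k^{\rm sGKS}-\by_k^{\rm sGKS_0})$. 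The sketched normal equations give $\norm{\widehat{\bS}\mathbf{g}}_2^2=\langle\widehat{\bS}\mathbf{g},\widehat{\bS}\mathbf{u}\rangle$. Combining \eqref{eq:eps_embed} with $\langle\mathbf{g},\mathbf{u}\rangle=0$ yields $(1-\gamma)\norm{\mathbf{g}}_2^2\le\gamma\norm{\mathbf{g}}_2\norm{\mathbf{u}}_2$. Hence
\[
f(\by_k^{\rm sGKS})=\norm{\mathbf{u}}_2^2+\norm{\mathbf{g}}_2^2\le\Bigl(1+\frac{\gamma^2}{(1-\gamma)^2}\Bigr)f(\by_k^{\rm sGKS_0}).
\]
This is at most $(1+\gamma)f(\by_k^{\rm sGKS_0})$, and so implies the stated inequality, whenever $\gamma\le(3-\sqrt{5})/2\approx 0.38$. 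It improves on your $(1+\gamma)/(1-\gamma)$ for all $\gamma\le 2/3$.
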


\begin{proof}
The result simply comes from applying the $\varepsilon$-subspace embedding property~\eqref{eq:norm_equiv} along with the optimality of $\by_k^{\rm sGKS}$ in terms of the sketched problem~\eqref{eq:tik_new_sketched}. In particular, it holds 
{\small
\begin{align*}
\norm{\bA\bW_k\by_k^{\rm sGKS} - \bb}_2^2+\lambda\norm{\bL\bW_k\by_k^{\rm sGKS}}_2^2\leq & \, \frac{1}{1-\varepsilon_A}\norm{\bS_A(\bA\bW_k\by_k^{\rm sGKS} - \bb)}_2^2+\frac{\lambda}{1-\varepsilon_L}\norm{\bS_L\bL\bW_k\by_k^{\rm sGKS}}_2^2\\
\leq& \,
\frac{1}{1-\theta}\bigg(\norm{\bS_A(\bA\bW_k\by_k^{\rm sGKS} - \bb)}_2^2+\lambda\norm{\bS_L\bL\bW_k\by_k^{\rm sGKS}}_2^2
\bigg)\\
\leq& \,
\frac{1}{1-\theta}\bigg(\norm{\bS_A(\bA\bW_k\by_k^{\rm sGKS_0} - \bb)}_2^2+\lambda\norm{\bS_L\bL\bW_k\by_k^{\rm sGKS_0}}_2^2
\bigg)\\
\leq &\,
\frac{1}{1-\theta}\bigg((1+\varepsilon_A)\norm{\bA\bW_k\by_k^{\rm sGKS_0} - \bb}_2^2+\lambda(1+\varepsilon_L)\norm{\bL\bW_k\by_k^{\rm sGKS_0}}_2^2
\bigg)\\
\leq&\, \frac{1+\gamma}{1-\theta}\bigg(\norm{\bA\bW_k\by_k^{\rm sGKS_0} - \bb}_2^2+\lambda\norm{\bL\bW_k\by_k^{\rm sGKS_0}}_2^2
\bigg),
\end{align*}
}
where $\theta=\min\{\varepsilon_A,\varepsilon_L\}$ and $\gamma=\max\{\varepsilon_A,\varepsilon_L\}$.
\end{proof}

\section{Numerical Experiments}\label{sec:numerics}

In this section we illustrate the performance of sGKS by testing it on problems stemming from image deblurring, computerized tomography, travel-time seismic tomography, and a dynamic computerized example. 
All computations were carried out in MATLAB\textsuperscript{\textregistered} R2026a with about 15 significant decimal digits running on a laptop computer with an 
Apple\textsuperscript{\textregistered} Core(TM)i7-8750H CPU @2.20GHz with 128GB of RAM. All experiments are executed in double precision.

For all test cases, we perturb the vector of measurements $\bb_{\mathrm{true}}$ with white Gaussian noise, i.e., the noise vector $\be$ has mean zero and a scaled identity covariance matrix; 
we refer to the ratio 
\begin{equation}
\sigma=\|\be\|_{2}/\|\bA\bx\|_{2}
\end{equation} as the noise level and $\bb=\bb_{\mathrm{true}}+\be$ will be the corrupted right-hand side in~\eqref{eq:ls}.  

In each case we compare the standard GKS algorithm with incremental QR
updates (denoted GKS) against the sketched variant (denoted sGKS).
The dimension of the starting Krylov subspace $\text{range}(\bV_0)=\text{range}(\bW_0)=\Kry_\ell(\bA^\top\bA, \bA^\top\bb)$  is $\ell = 3$. 
 We employ a stopping criterion based on the
relative change
\begin{equation}\label{eq:RC}
  \mathrm{RC}_k
  = \frac{\|\bx_k - \bx_{k-1}\|}{\|\bx_{k-1}\|}
  < 10^{-5},
\end{equation}
for both GKS and sGKS. Unless stated otherwise,
the regularization 
matrix used is the first-order finite-difference operator given by
\begin{equation}\label{eq: reg_matrix}
  \bL = \begin{bmatrix} \bI_{n_y} \otimes \bD_{n_x} \\
        \bD_{n_y} \otimes \bI_{n_x} \end{bmatrix}
  \in \R^{p \times n}, \quad \text{where} \quad
  \bD_{n_d} = \begin{bmatrix} -1 & 1 & & \\ & \ddots & \ddots & \\ & & -1 & 1
          \end{bmatrix} \in \R^{(n_d-1) \times n_d}, 
\end{equation}
with $d$ denoting $x$ and $y$ direction.
For the sGKS method the sketch
dimensions $s_A$ and $s_L$ are set
as small multiples of the maximum subspace dimension.
Reconstruction quality is measured by the relative reconstruction error
\begin{equation}
\mathrm{RRE} = \|\bx_k - \bx_{\mathrm{true}}\|_2 / \|\bx_{\mathrm{true}}\|_2
.
\end{equation}
In all numerical experiments we fix the regularization parameter at a manually tuned optimal value~$\lambda^*$ so that differences between methods reflect algorithmic performance only. Automatic selection strategies such as discrepancy principle, generalized cross validation, and the L-curve criterion~\cite{Reichel03,CalvettiReichel03,LRV12, buccini2021generalized} are compatible with the proposed framework but we do not pursue them here. The interested reader is referred to \cite{buccini2024software} and the references therein for further details.

Test problems are generated with Matlab software; the IRtools~\cite{GazzolaNagyPer19} and
AIR~Tools~II~\cite{hansen2018air} packages. Software in Python for GKS can be found in \cite{pasha2025trips}. The Matlab implementations of the methods described in this paper, along with a selection of the tests illustrated in this section, will be available at \url{https://github.com/mpasha3/sGKS} once the manuscript is accepted for publication.

\subsection{Experiment 1: Image Deblurring}\label{sec:deblur}

In the first numerical experiment we consider image deblurring, a classical inverse problem in which the goal is to
recover a sharp image from an observed image that has been degraded by
blur and contaminated by noise.
More specifically, we consider the recovery of an $n_x \times n_y$ HST (Hubble Space
Telescope) image shown in Figure~\ref{fig:deblur_setup}(a), such that
$\bx_{\mathrm{true}} \in \R^n$ with $n = n_x n_y$, and the observed
(blurred and noisy image) by $\bb \in \R^n$ shown in Figure~\ref{fig:deblur_setup}(c). We set $n_x = n_y = 400$, yielding $n = n_x n_y = 160{,}000$ unknowns, and add
white Gaussian noise at relative level $\sigma = 0.1\%$.
The blurring process is modeled as a convolution with a known
point-spread function (PSF), displayed in Figure~\ref{fig:deblur_setup}(b), which gives rise to a forward operator
$\bA \in \R^{n \times n}$ encoding the discrete convolution and can be generated through \texttt{PRblur} from IRtools~\cite{GazzolaNagyPer19}.
In this experiment we use the regularization operator $\bL$ in~\eqref{eq: reg_matrix} with $p=319,200$ rows.

To investigate the sensitivity of sGKS to the sketch dimension, we run
the method with five values of~$s\equiv s_A=s_L$:
$s = 405$, $1{,}000$, $5{,}000$, $10{,}000$, and $15{,}000$,
alongside standard GKS as the baseline.
All methods are run for $400$ iterations at the optimal regularization
parameter~$\lambda^*$ for each selection of $s$. Reconstructed images by GKS, sGKS with $s=405$ and sGKS with $s= 15,000$ are shown in Figure~\ref{fig:deblur_rec} (a), (b), and (c), respectively. Visually, the reconstructions provided by sGKS are of the same quality as those computed by GKS, even for the smallest sketching dimension $s$.

Further, we examine the convergence behavior of GKS and sGKS under two higher noise levels,
$\sigma = 1\%$ and $\sigma = 5\%$.
Figure~\ref{fig:deblur_rre} reports the relative reconstruction error
as a function of the iteration count for all six methods (GKS and sGKS for five different values of $s$) at each noise
level.
The pentagon markers indicate the iteration at which each method would
have been terminated had we employed the stopping criterion~\eqref{eq:RC}. 

We start by focusing on the case of $\sigma = 1\%$ (Figure~\ref{fig:deblur_rre} top).  First, we observe that all sGKS variants converge to a final RRE that is close to the GKS baseline, and the gap narrows monotonically as~$s$ increases:
the largest sketch ($s = 15{,}000$) achieves an
RRE of $0.1132$, essentially matching GKS ($0.1132$), while the
smallest sketch ($s = 405$) saturates at $0.115$.
Second, the convergence trajectories of sGKS with large~$s$ are
nearly indistinguishable from GKS throughout the entire iteration
history, confirming that a sufficiently large sketch preserves the
spectral approximation quality of the projected problem.
Third, the stopping iteration identified by the relative-change
criterion~\eqref{eq:RC} varies across methods:
GKS stops at iteration~89, while sGKS with
$s = 15{,}000$ stops at iteration~94 at a comparable RRE, and sGKS
with $s = 405$ triggers the criterion much earlier (iteration~27)
due to the stagnation induced by the coarse sketch.

Finally, the same qualitative behavior is observed at the higher
noise level ($\sigma = 5\%$ -- Figure~\ref{fig:deblur_rre} bottom), with all methods converging to a
higher floor determined by the noise, and the ranking among sketch
sizes remains consistent.

\begin{figure}[t!]
\centering
\begin{subfigure}[t]{0.3\textwidth}
\centering
\includegraphics[width=\textwidth]{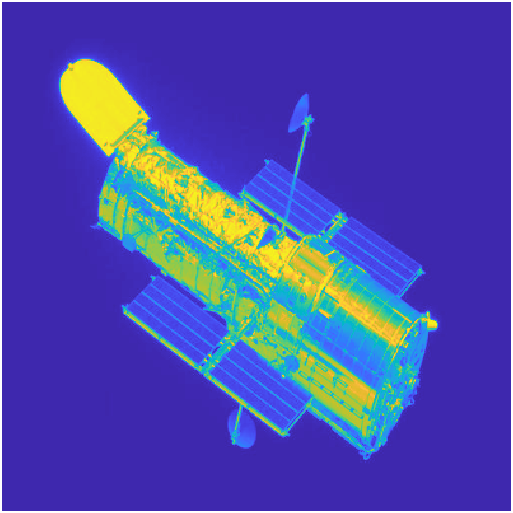}
\caption{}
\label{fig:deblur_true}
\end{subfigure}
\hfill
\begin{subfigure}[t]{0.3\textwidth}
\centering
\includegraphics[width=\textwidth]{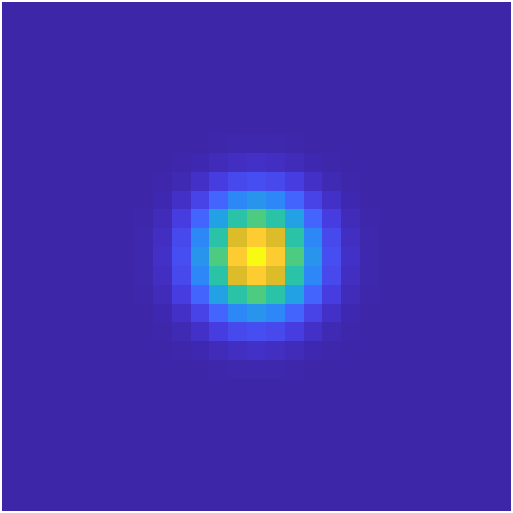}
\caption{}
\label{fig:deblur_psf}
\end{subfigure}
\hfill
\begin{subfigure}[t]{0.3\textwidth}
\centering
\includegraphics[width=\textwidth]{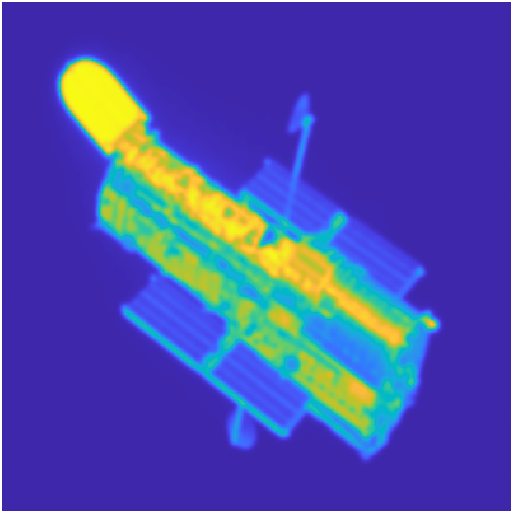}
\caption{}
\label{fig:deblur_blurred}
\end{subfigure}
\caption{Experiment 1 (Image deblurring).
(a)~Ground truth (HST image, $400 \times 400$), (b)~Motion point-spread function at mild blur level, and
(c)~Observed image after convolution and noise contamination with $\sigma = 1\%$.}
\label{fig:deblur_setup}
\end{figure}

\begin{figure}[t!]
\centering
\begin{subfigure}[t]{0.3\textwidth}
\centering
\includegraphics[width=\textwidth]{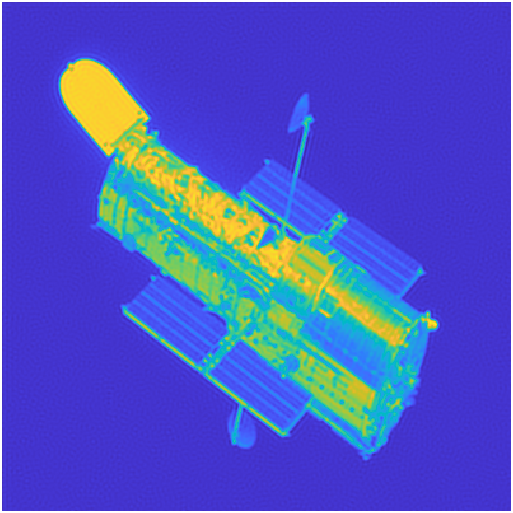}
\caption{}
\label{fig:deblur_true}
\end{subfigure}
\hfill
\begin{subfigure}[t]{0.3\textwidth}
\centering
\includegraphics[width=\textwidth]{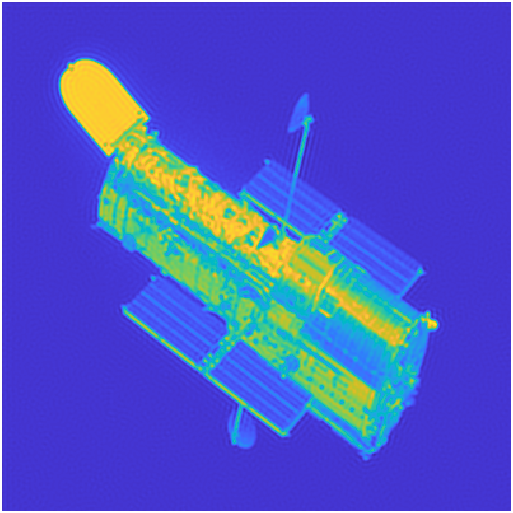}
\caption{}
\label{fig:deblur_psf}
\end{subfigure}
\hfill
\begin{subfigure}[t]{0.3\textwidth}
\centering
\includegraphics[width=\textwidth]{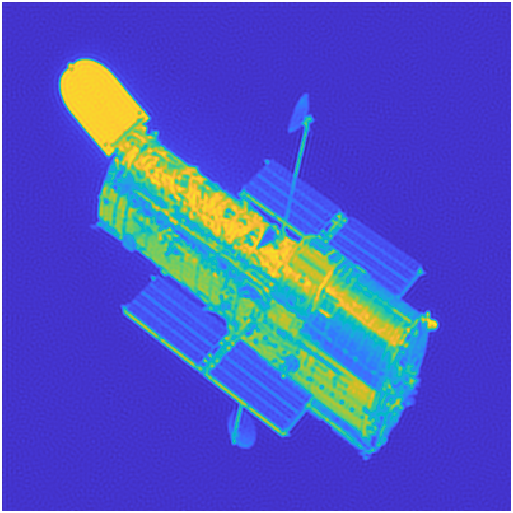}
\caption{}
\label{fig:deblur_blurred}
\end{subfigure}
\caption{Experiment 1 (Image deblurring) ($n_x = n_y = 400$): reconstructions at optimal~$\lambda^*$. (a) ~GKS (no sketching). (b) ~sGKS with $s = 405$ (smallest sketch dimension). (c) ~sGKS with $s = 15{,}000$ (largest sketch dimension).} 
\label{fig:deblur_rec}
\end{figure}

\begin{figure}[t!]                  \centering
\includegraphics[width=\textwidth]{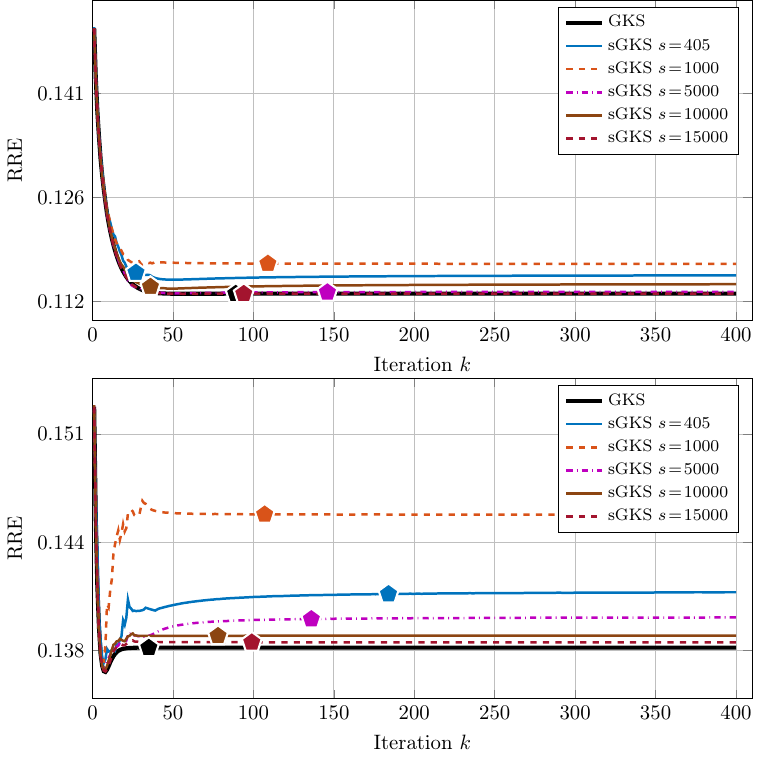}
\caption{Experiment 1 (Image deblurring). Relative reconstruction error versus iteration at the optimal $\lambda^*$ for GKS and sGKS. Top: $\sigma=1\%$. Bottom: $\sigma=5\%$. Pentagon markers indicate the iteration at which the stopping criterion~\eqref{eq:RC} is met.}                  \label{fig:deblur_rre}
\end{figure}   

\subsection{Tomographic Reconstruction Problems}
The following three experiments investigate tomographic reconstruction problems of increasing complexity. The second experiment considers an X-ray CT reconstruction problem, a standard test
  case in which the forward operator inherits the structure of the Radon transform. We then consider seismic travel-time tomography, characterized by a large sparse forward model arising from ray-based measurements. The final experiment addresses dynamic CT, where a sequence of time-dependent images must be reconstructed simultaneously using a space-time regularization model. These examples illustrate the effectiveness of the proposed approach for both static and dynamic tomographic inverse problems.
\subsubsection{Experiment 2: X-ray CT}\label{sec:exp_CT}
\begin{figure}[t]
\centering
\begin{subfigure}[t]{0.45\textwidth}
\centering
\includegraphics[width=\textwidth]{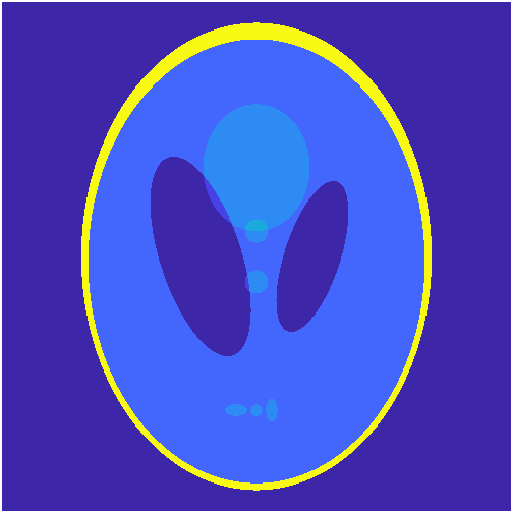}
\caption{}
\label{fig:ct_true}
\end{subfigure}
\hfill
\begin{subfigure}[t]{0.50\textwidth}
\centering
\includegraphics[width=\textwidth]{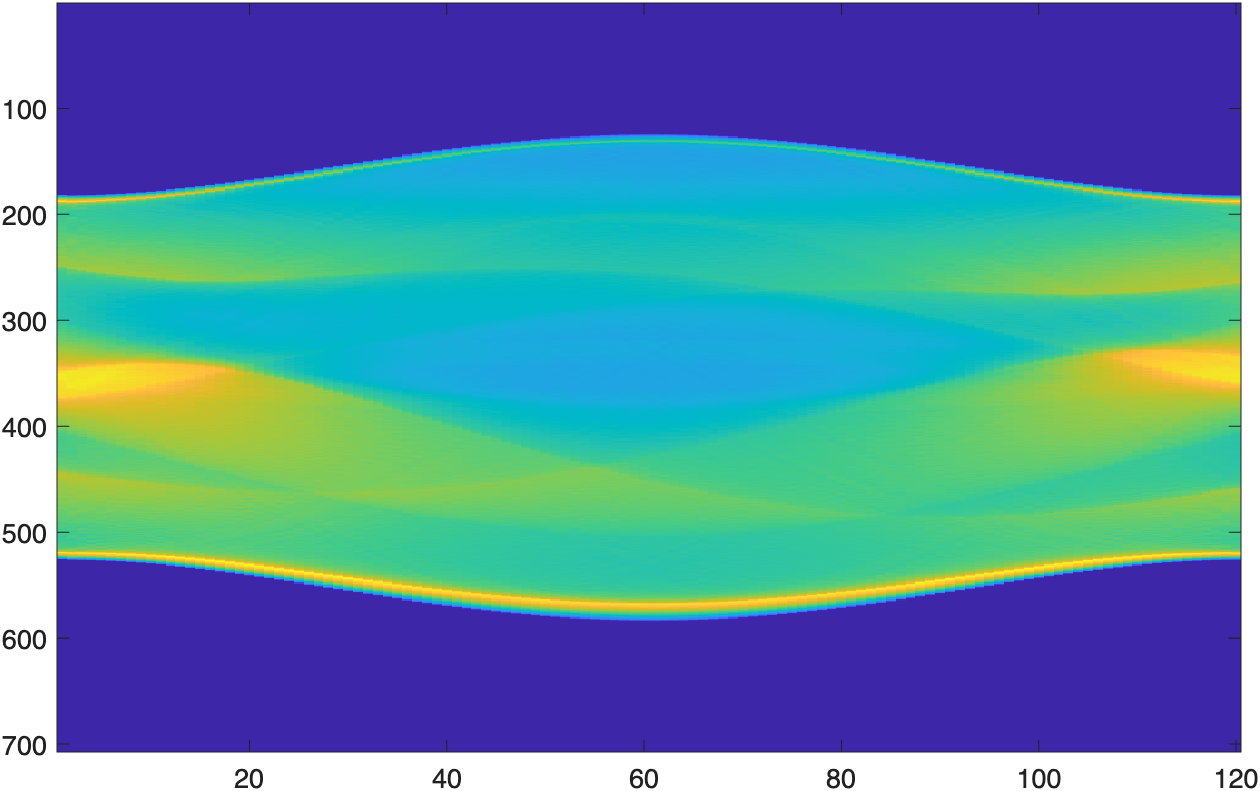}
\caption{}
\label{fig:ct_sinogram}
\end{subfigure}
\caption{Experiment 2 (Computerized tomography).
(a)~ True Shepp-Logan phantom.
(b)~Measured projections; rows correspond to detector positions,
columns to the $120$ projection angles
$\theta \in [0^\circ, 179^\circ]$.}
\label{fig:ct_setup}
\end{figure}

\begin{figure}[ht]
\centering
\begin{subfigure}[t]{0.48\textwidth}
\centering
\includegraphics[width=\textwidth]{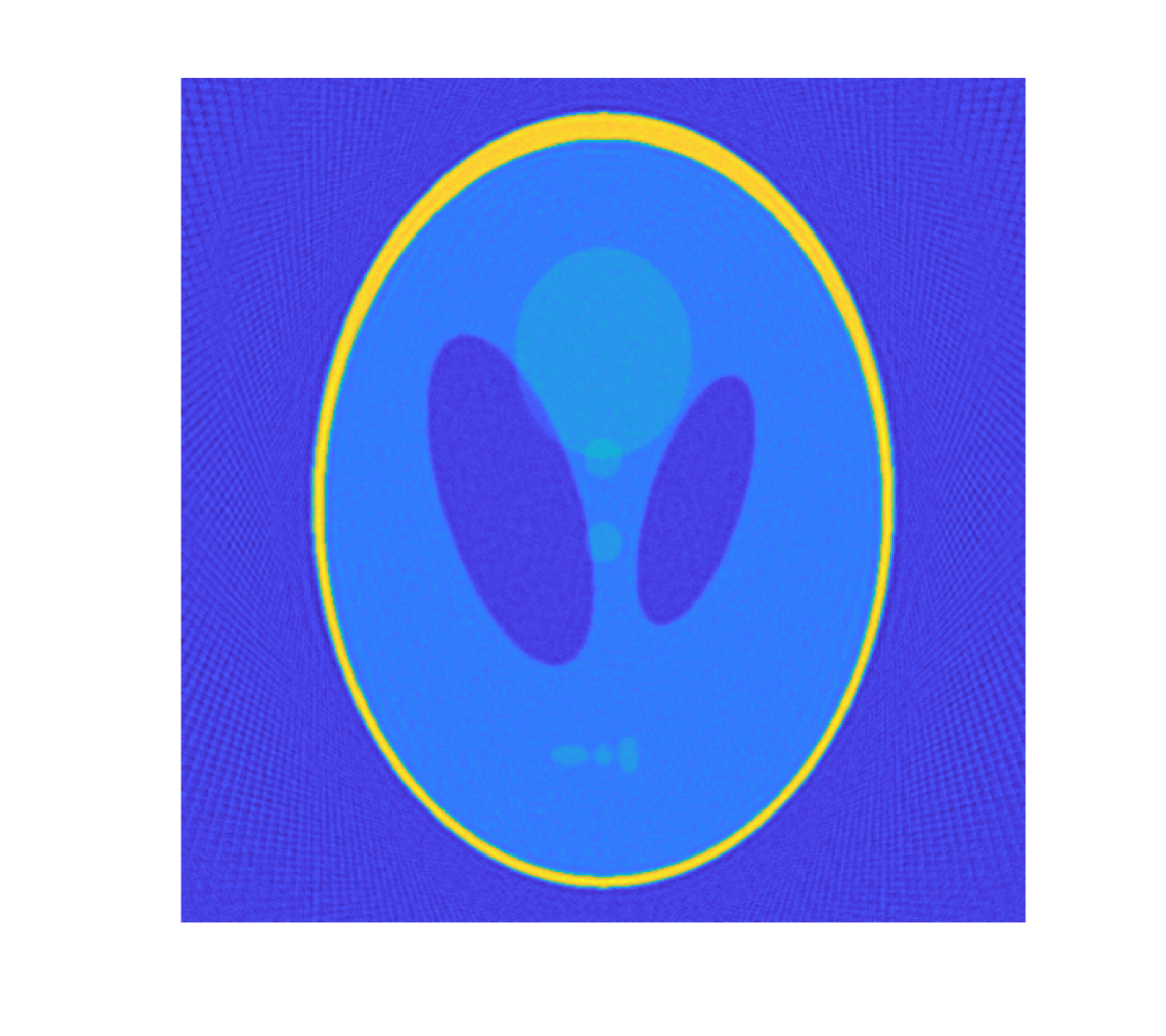}
\caption{}
\label{fig:ct_rre_time}
\end{subfigure}
\hfill
\begin{subfigure}[t]{0.48\textwidth}
\centering
\includegraphics[width=\textwidth]{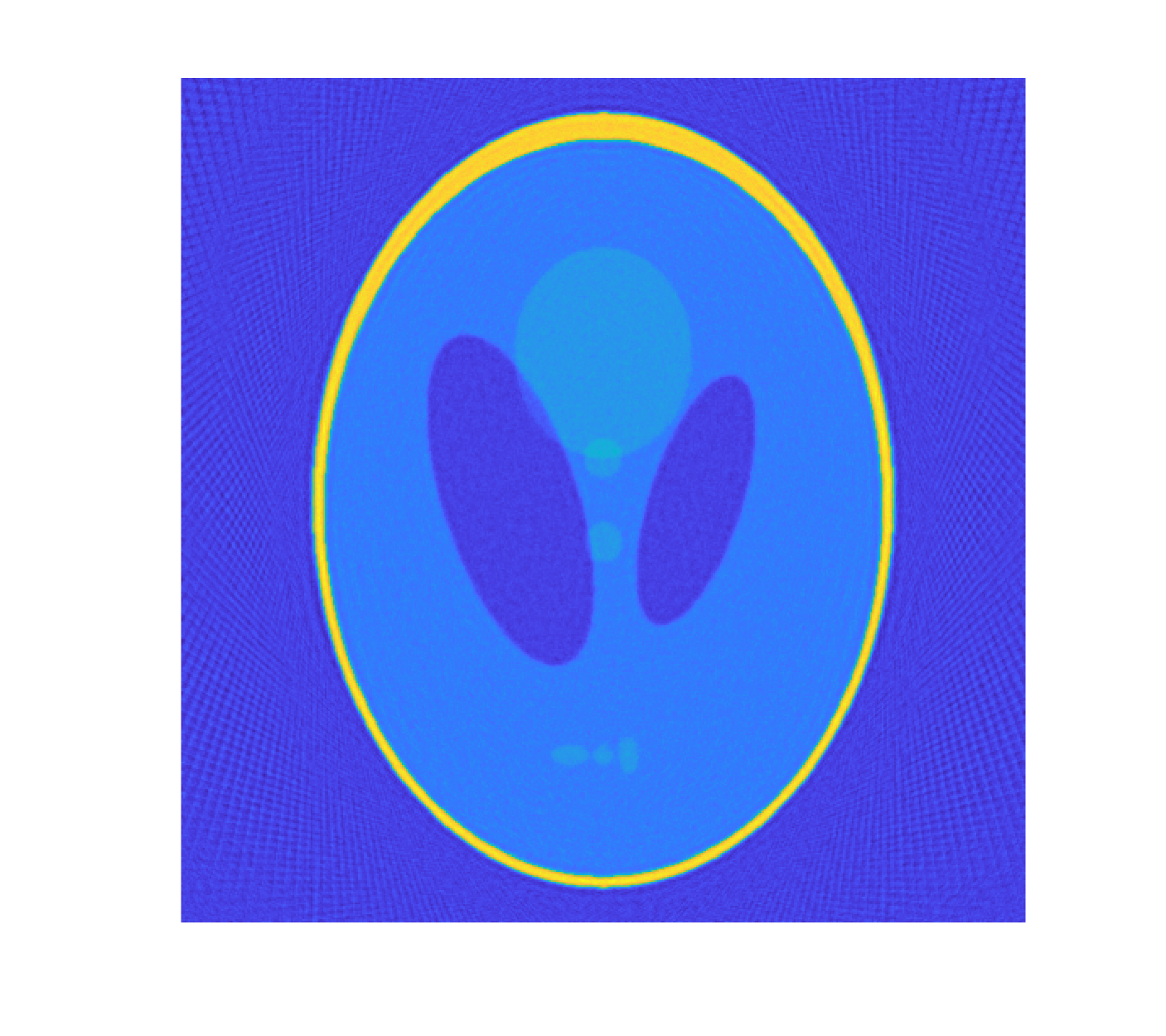}
\caption{}
\label{fig:ct_timing}
\end{subfigure}
\caption{Experiment 2 (Computerized Tomography with $n_x = n_y = 500$). (a) GKS reconstruction, (b) sGKS reconstruction.}
\label{fig:reconstructions}
\end{figure}

\begin{figure}[ht!]
\centering
\includegraphics[width=\textwidth]{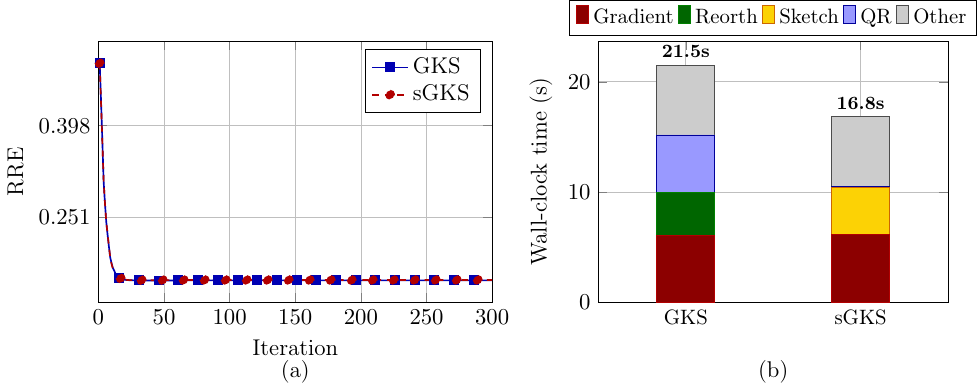}
\caption{Experiment 2 (Computerized tomography with $n_x = n_y = 500$ and optimal $\lambda$).      (a) RRE versus iteration for GKS and sGKS.       (b) total wall-clock time.}     \label{fig:CT_conv}\end{figure}
Having established the accuracy of sGKS across a range of sketch dimensions in the deblurring experiment, we now turn to a larger
problem where the computational savings become more pronounced.
We consider a two-dimensional parallel-beam X-ray CT reconstruction in which the goal is to recover an $n_x \times n_y$ cross-sectional image
from a set of line-integral measurements collected at multiple
projection angles. 
The forward operator $\bA \in \R^{m \times n}$ with $n = n_x n_y$
models parallel-beam projections at $120$ uniformly spaced angles in
$[0^\circ, 179^\circ]$, generated by \texttt{PRtomo} from
IRtools~\cite{GazzolaNagyPer19}.
We set $n_x = n_y = 500$, yielding $n = n_x n_y = 250{,}000$
unknowns and $m = 60{,}000$ measurements; the system is thus
underdetermined ($m \ll n$).
White Gaussian noise is added at relative level $\sigma = 1\%$.
The true image (Shepp-Logan phantom) is displayed in Figure~\ref{fig:ct_setup}(a) and the corresponding noisy sinogram (the image of measured projections), with rows indexed by detector position and columns by projection angle is shown in Figure~\ref{fig:ct_setup}(b). 
In this experiment we use the regularization operator~$\bL$ in~\eqref{eq: reg_matrix}
with $p = 499{,}000$ rows. 

We compare GKS and sGKS, both run for $300$ iterations with fixed regularization parameter.
The latter is hand-tuned to
minimize the RRE; both methods use the same value, $\lambda^* = 35.9$. We use ~$s\equiv s_A=s_L = 606$.
Figure~\ref{fig:reconstructions} displays the reconstructions produced by
both methods at the optimal regularization parameter.
The images are visually indistinguishable, and the final RRE values  are nearly identical ($0.1831$ for GKS and $0.1834$ for sGKS), confirming that the computational savings do not come at the expense of reconstruction quality.
Figure~\ref{fig:CT_conv}(a) plots the RRE as a function of the
iteration count for both methods.
The convergence trajectories are nearly identical: both GKS and sGKS
converge monotonically to a final RRE of approximately $0.183$.
This confirms that the sketching approximation does not
degrade the quality of the projected regularization problem, even on
a system with $n = 250{,}000$ unknowns and a regularization operator
with nearly half a million rows. 
Figure~\ref{fig:CT_conv}(b) reports the total wall-clock time
and a stacked-bar decomposition of the per-component costs.
For GKS, the two dominant operations are the incremental QR
factorization update of
$\bL \bV_k \in \R^{p \times d_k}$ (with $p = 499{,}000$) and the
double modified Gram-Schmidt reorthogonalization of each new basis
vector against all previous vectors.
Together, these account for more than $40\%$ of the total GKS runtime.

In sGKS, the QR update operates on the sketched matrix $\bS_L \bL \bV_k \in 
\R^{s \times d_k}$ rather than the full $p \times d_k$ matrix, reducing its cost by a factor proportional to $p / s$, and the reorthogonalization is      
  eliminated entirely. In its place, the SRTT sketch application adds a modest 
  overhead that is more than offset by these savings. As mentioned at the end of
   Section~\ref{The sketching operator}, this overhead is partly due to our
  naive implementation, which causes the application of $\bS$ to an
  $n$-dimensional vector to cost $\mathcal{O}(n\log n)$ flops; more
  sophisticated implementations would reduce this to $\mathcal{O}(n\log s)$
  flops~\cite{WOOLFEetal2008}, inducing a further speedup.

\subsubsection{Experiment 3: Seismic Travel-Time Tomography}\label{sec:seismic}

The third experiment considers seismic travel-time tomography, in which
the goal is to reconstruct the subsurface slowness field from measured
arrival times of seismic waves traveling between sources and receivers
along the boundary of the domain.
Under the ray approximation, each measurement is a line integral of the
slowness field along a ray path, producing a sparse forward operator
$\bA \in \R^{m \times n}$ whose rows correspond to source--receiver
pairs.
This problem differs from the previous experiments in two respects:
the operator is sparse and unstructured matrix-vector products
require explicit sparse arithmetic rather than fast transforms and
the system is overdetermined ($m \gg n$).

More specifically, we consider the recovery of a two-dimensional tectonic velocity model
on an $n_x \times n_y$ grid with $n_x = n_y = 256$, yielding
$n = n_x n_y = 65{,}536$ unknowns.
The forward operator
$\bA \in \R^{m \times n}$ is a ray-based travel-time matrix generated
by \texttt{PRseismic} from IRtools~\cite{GazzolaNagyPer19} with
$2n_x$ receivers and $3n_x$ sources uniformly distributed along the
boundary, giving $m = 393{,}216$ measurements ($m \approx 6n$).
White Gaussian noise is added at relative level $\sigma = 1\%$.
In this experiment we use the regularization operator~$\bL$ with
$p = 130{,}560$ rows.
The true velocity model and the noisy travel-time data are displayed
in Figure~\ref{fig:seis_setup}.

In addition to comparing GKS and sGKS, this experiment includes
two methods from the hybrid Krylov framework: LSQR-opt, the hybrid
LSQR method based on the Golub--Kahan
bidiagonalization~\cite{ChungGazzola25,GazzolaNagyPer19}, and
rLSQR-opt, its randomized counterpart based on the randomized
Golub--Kahan bidiagonalization with SRTT
sketching~\cite{ChungGazzola25}.
As in the deblurring experiment, we vary the sketch dimension~$s$
for sGKS across multiple values.
More precisely, we compare the following methods:
\begin{enumerate}
  \item [$\diamond$] \textbf{GKS}: the standard method with double modified
        Gram--Schmidt reorthogonalization and incremental QR updates;
  \item [$\diamond$] \textbf{LSQR-opt}: hybrid LSQR based on the Golub--Kahan
        bidiagonalization with fixed regularization parameter
        (\texttt{IRhybrid\_lsqr} from
        IRtools~\cite{GazzolaNagyPer19});
  \item [$\diamond$] \textbf{rLSQR-opt}: its randomized variant based on the
        randomized Golub--Kahan bidiagonalization with SRTT sketching
        and fixed regularization parameter~\cite{ChungGazzola25};
  \item [$\diamond$] \textbf{sGKS}: at sketch dimensions
        $s = 300$, $1{,}000$, $5{,}000$, and $10{,}000$.
\end{enumerate}
All methods are run for $300$ iterations.
GKS and sGKS use the same hand-tuned regularization parameter
$\lambda^* \approx 37.3$.
For LSQR-opt, the regularization parameter is selected from $15$
logarithmically spaced candidates $\lambda \in [10^{-1}, 10^3]$;
for rLSQR-opt, a joint search over $\lambda$ and sketch dimension
$s \in \{500, 1000, 2000, 5000\}$ is performed.
In both cases the value minimizing the RRE is retained.

Figure~\ref{fig:seis_recon} displays the reconstructions for all five
representative methods at their respective optimal parameters.
GKS and sGKS with $s = 10{,}000$ produce visually indistinguishable
results with well-resolved velocity boundaries.
sGKS with $s = 300$ shows mild smoothing due to the limited sketch
dimension.
LSQR-opt and rLSQR-opt recover the overall structure but with
slightly less contrast in the fine-scale features.

Figure~\ref{fig:seis_conv} reports the RRE as a function of iteration
count for all methods.
GKS achieves the lowest final RRE ($\approx 0.097$), and sGKS with
the largest sketch dimensions ($s = 5{,}000$ and $s = 10{,}000$)
closely tracks the GKS trajectory, reaching final RREs of $0.1029$
and $0.1031$, respectively.
As the sketch dimension decreases, the sGKS curves gradually deviate
from GKS: $s = 1{,}000$ reaches $0.111$ and
$s = 300$ saturates at $0.150$.
This is consistent with the deblurring experiment and confirms that
the accuracy and efficiency tradeoff controlled by~$s$ is
problem-independent.
LSQR-opt converges to a final RRE of $0.117$, and rLSQR-opt to
$0.117$, both with their respective hand-tuned regularization
parameters.
Neither hybrid method matches the GKS/sGKS accuracy on this problem,
which reflects the different projected regularization structure:
GKS solves a general-form Tikhonov problem with the explicit
regularization operator~$\bL$, whereas the LSQR-based methods
regularize in the Krylov coordinate system.

Figure~\ref{fig:seis_timing} presents a stacked-bar decomposition of
the wall-clock time for all methods.
For GKS and sGKS, the gradient computation dominates, as the sparse
matrix-vector products with
$\bA \in \R^{393{,}216 \times 65{,}536}$ are inherently expensive.
The QR and reorthogonalization costs in GKS are a smaller fraction
of the total than in the CT experiment because the regularization
operator $\bL$ has fewer rows ($p = 130{,}560$ vs.\ $499{,}000$),
and consequently the sGKS variants achieve only a modest speedup
(total times $\approx 27$\,s vs.\ $30$\,s for GKS).
LSQR-opt is the most expensive method at $60$\,s, reflecting the
cost of full reorthogonalization in the Golub--Kahan process together
with the projected Tikhonov solve at each iteration.
rLSQR-opt, which replaces the reorthogonalization with sketched
inner products, reduces the running time to $32$\,s.

On this problem, the QR and reorthogonalization costs that sGKS eliminates
account for only ${\sim}15\%$ of the GKS total, with the gradient computation
and matrix--vector products constituting the remaining ${\sim}85\%$, so sGKS
provides a modest speedup over GKS (${\sim}10\%$). Nevertheless, the
comparison
with LSQR-opt ($60$\,s) and rLSQR-opt ($32$\,s) highlights that the
GKS/sGKS framework is substantially more efficient on this problem while also
delivering superior reconstruction quality. A more pronounced speedup is
demonstrated in Experiment~4 (Section~\ref{sec:dynamic_CT}), where the large
space--time regularization operator makes the QR and reorthogonalization costs
dominant, and sGKS achieves a $3\times$ speedup over standard GKS.

\begin{figure}[t]
\centering
\includegraphics[width=0.85\textwidth]{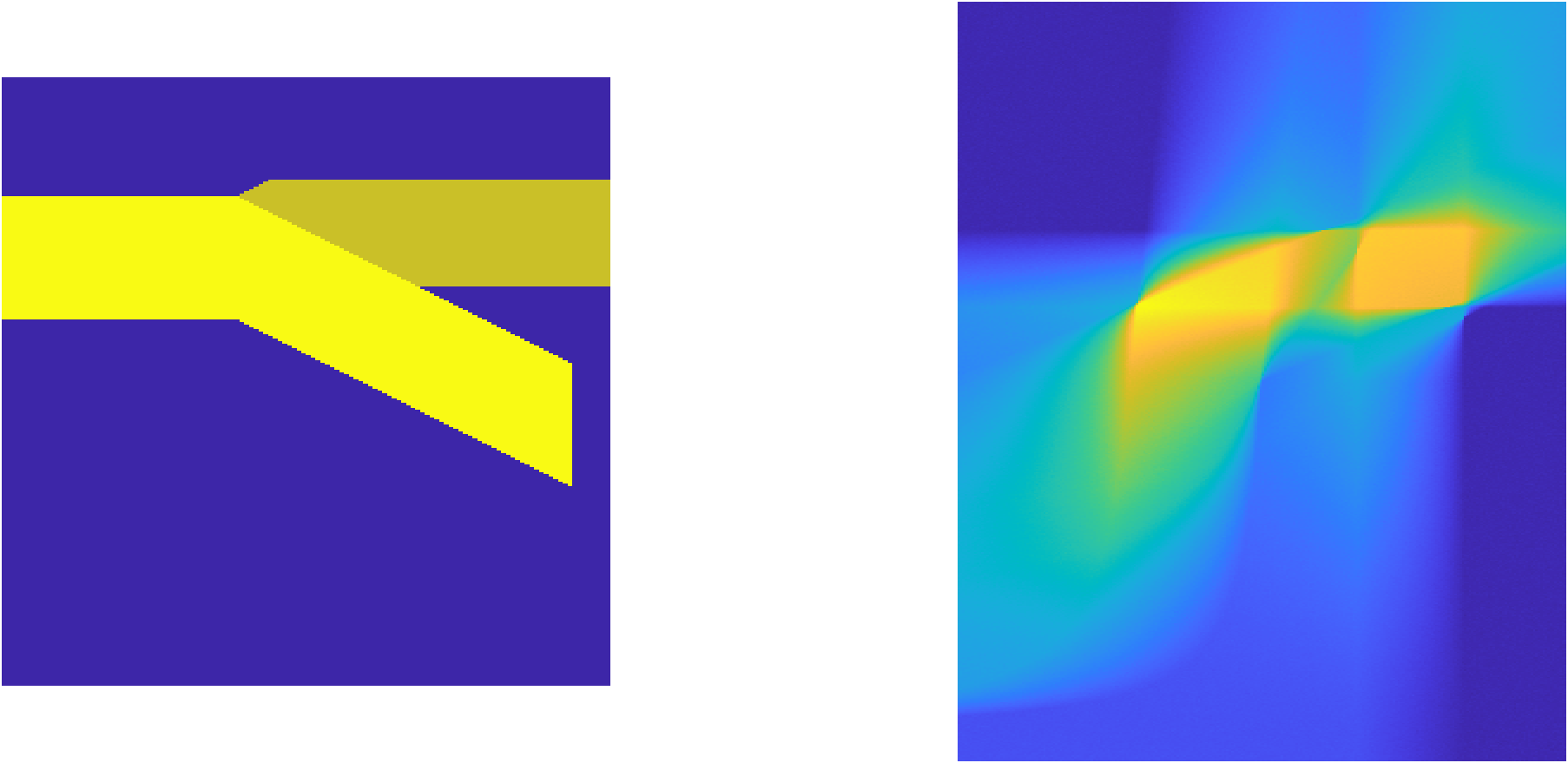}
\caption{Experiment 3 (Seismic tomography).
(a)~Tectonic velocity model ($n_x = n_y = 256$).
(b)~Noisy travel-time measurements ($\sigma = 1\%$).}
\label{fig:seis_setup}
\end{figure}

\begin{figure}[t]
\centering
\begin{subfigure}[t]{0.19\textwidth}
\centering
\includegraphics[width=\textwidth]{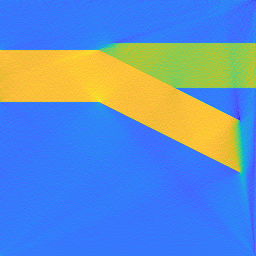}
\caption{LSQR-opt}
\end{subfigure}
\hfill
\begin{subfigure}[t]{0.19\textwidth}
\centering
\includegraphics[width=\textwidth]{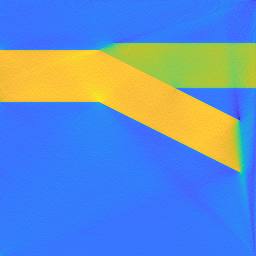}
\caption{rLSQR-opt}
\end{subfigure}
\hfill
\begin{subfigure}[t]{0.19\textwidth}
\centering
\includegraphics[width=\textwidth]{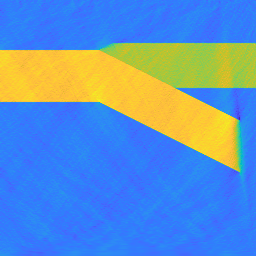}
\caption{GKS}
\end{subfigure}
\hfill
\begin{subfigure}[t]{0.19\textwidth}
\centering
\includegraphics[width=\textwidth]{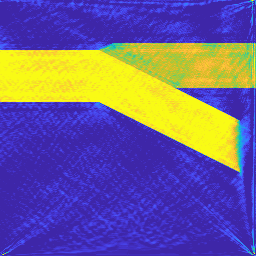}
\caption{sGKS $s{=}300$}
\end{subfigure}
\hfill
\begin{subfigure}[t]{0.19\textwidth}
\centering
\includegraphics[width=\textwidth]{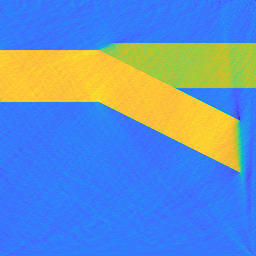}
\caption{sGKS $s{=}10000$}
\end{subfigure}
\caption{Experiment 3 (Seismic tomography,
$n_x = n_y = 256$): reconstructions at optimal regularization
parameters provided by the different methods we tested.}
\label{fig:seis_recon}
\end{figure}

\begin{figure}[t]
  \centering
\includegraphics[width=\textwidth]{  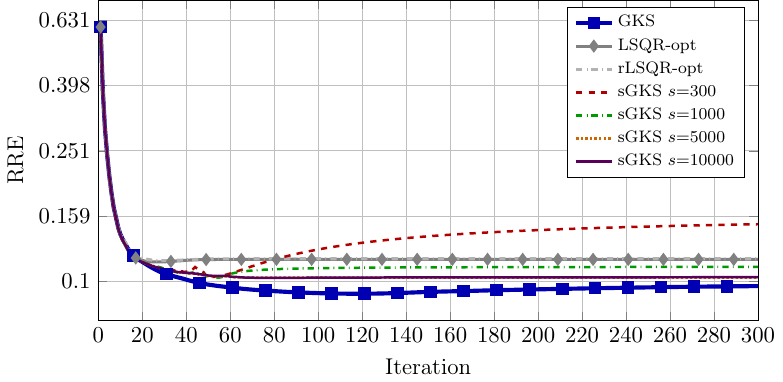}
  \caption{Experiment 3 (Seismic tomography):
           RRE versus iteration at the optimal~$\lambda^*$ for GKS,
           LSQR-opt, rLSQR-opt, and sGKS at four sketch dimensions.}
\label{fig:seis_conv}
\end{figure}

\begin{figure}[t]
  \centering
\includegraphics[width=\textwidth]{   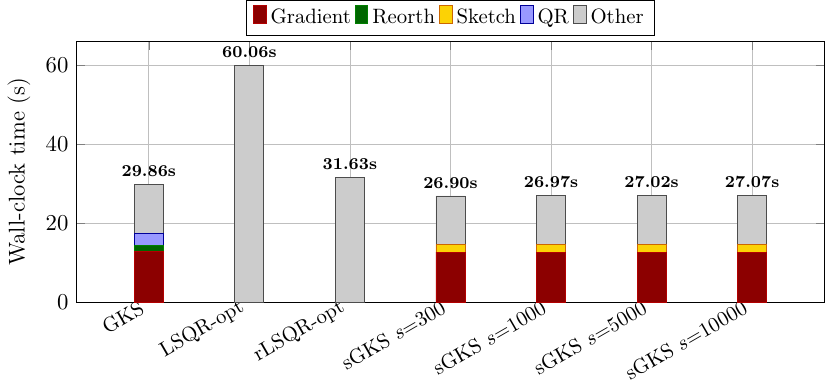}
  \caption{Experiment 3 (Seismic tomography):
           stacked-bar wall-time breakdown for GKS, sGKS at
           each sketch dimension, LSQR-opt, and rLSQR-opt. For GKS and sGKS, the gradient cost dominates both methods so that the QR and
           reorthogonalization savings provided by sGKS are modest on this problem.}
  \label{fig:seis_timing}
\end{figure}

\subsection{Experiment 4: Dynamic CT}\label{sec:dynamic_CT}
The final experiment demonstrates the applicability of sGKS to a
dynamic inverse problem, where the unknown is a sequence of images
evolving over time and the regularization operator encodes both spatial
and temporal structure.
Unlike the previous experiments, which reconstruct a single static
image, this test requires the simultaneous recovery of a time-varying
object from temporally fragmented measurements.
The difficulty is twofold: (i)~each individual time frame is observed
from only a narrow range of projection angles, making every
single-frame subproblem severely ill-posed, and (ii)~the full
space--time system is very large, so the per-iteration cost of standard
GKS becomes a significant practical concern.
We consider a dynamic parallel-beam CT problem in which the unknown is
a time-varying phantom consisting of four piecewise-constant geometric
objects (two L-shaped and two T-shaped blocks) with distinct
intensities.
Each object translates rigidly in a different direction at one of two
speeds ($1$ or $2$ pixels per frame).
The phantom is defined on an $n_x \times n_y = 256 \times 256$ spatial
grid and evolves over $n_t = 10$ time frames.
The test phantom is inspired by the image sequences
in~\cite{okunola2025efficient}.
The forward operator at each time frame~$t$ is a parallel-beam Radon
transform $\bA_t \in \R^{m_t \times n_s}$ ($n_s = n_x^2$) generated
by \texttt{PRtomo} from IRtools~\cite{GazzolaNagyPer19}.
At each time step~$t$, a distinct set of $30$ projection angles is
used, arranged in disjoint angular sectors: the angular range
$[0^\circ, 180^\circ)$ is partitioned into $n_t$ equal sectors, and
frame~$t$ uses only the angles in its assigned sector.
Frame~$t$ thus observes only an $18^\circ$ range of projections.
The full $180^\circ$ angular coverage is achieved only when all
$n_t$ frames are considered jointly, which is the key motivation for
the space--time formulation: temporal coupling allows each frame to
borrow angular information from neighboring frames.

The block-diagonal forward operator
$\bA = \operatorname{blkdiag}(\bA_1, \ldots, \bA_{n_t})
\in \R^{m \times n}$
has $n = n_x n_y n_t = 655{,}360$ unknowns and
$m = 153{,}600$ measurements.
The system is heavily underdetermined ($m \ll n$).
White Gaussian noise is added at relative level $\sigma = 0.1\%$.
The true frames and corresponding sinograms at selected time steps are
displayed in Figure~\ref{fig:dyntomo_true_sino}.

The full problem can be formulated as
\begin{equation}\label{eq:blockF}
\underbrace{
\begin{bmatrix} \bA_{1} \\
 & \ddots \\
 & & \bA_{n_t}
\end{bmatrix}}_{\bA}
\underbrace{\begin{bmatrix}
    \bx_1 \\ \vdots \\ \bx_{n_t}
\end{bmatrix}}_{\bx}
+
\underbrace{\begin{bmatrix}
    \be_1 \\ \vdots \\ \be_{n_t}
\end{bmatrix}}_{\be}
=
\underbrace{\begin{bmatrix}
    \bb_1 \\ \vdots \\ \bb_{n_t}
\end{bmatrix}}_{\bb}.
\end{equation}
Regularization leads to the minimization problem
\begin{equation}\label{eq:spacetime_tik}
    \min_{\bx \in \R^{n}} \|\bA\bx - \bb\|_2^2 + \lambda\|\Psi\bx\|_2^2,
\quad \text{with} \quad \Psi = \begin{bmatrix}
  \bI_{n_t}\otimes\bI_{n_y}\otimes\bL_{x} \\
  \bI_{n_t}\otimes \bL_{y}\otimes\bI_{n_x} \\
  \bL_{t}\otimes\bI_{n_y}\otimes\bI_{n_x}
\end{bmatrix},
\end{equation}
where $\bL_d$ denotes the discrete first-derivative operator in
direction~$d$, for $d = x$ (vertical), $d = y$ (horizontal), and
$d = t$ (time).
The first two blocks enforce spatial smoothness within each frame;
the third enforces temporal consistency across frames.
Alternative definitions of~$\Psi$ for dynamic inverse problems can be
found in~\cite{pasha2021efficient} and, within the Bayesian framework,
in~\cite{lan2025spatiotemporal}.

For this experiment we also test a version of sGKS equipped with iterative refinement (IR); see the discussion at the end of section~\ref{sec:analysis}. We follow the paradigm named FOSSILS in~\cite{Iterative_refinement_sketching}. In particular, IR is implemented by applying~\cite[Algorithm 1]{Iterative_refinement_sketching} with two different settings: (i) $q=0$ means that line 6 in~\cite[Algorithm 1]{Iterative_refinement_sketching} is not performed; (ii) $q=1$ means that a single iteration of~\cite[Algorithm 2]{Iterative_refinement_sketching} is performed in line 6 in~\cite[Algorithm 1]{Iterative_refinement_sketching}. 

We thus consider the following methods for comparison:
\begin{enumerate}
  \item [$\diamond$] \textbf{OBO}: one-by-one GKS reconstruction of each frame
        independently using only the spatial regularization
        operator~$\bL$ in~\eqref{eq: reg_matrix} (no temporal coupling);
  \item [$\diamond$] \textbf{GKS}: standard GKS with the full space-time
        operator~$\Psi$;
  \item [$\diamond$] \textbf{sGKS (no IR)}: the sketched variant without
        iterative refinement;
  \item [$\diamond$] \textbf{sGKS (IR, $q=0$)}: sGKS with IR as in \cite[Algorithm 1]{Iterative_refinement_sketching} avoiding line 6 in the latter scheme;
  \item [$\diamond$] \textbf{sGKS (IR, $q=1$)}: sGKS with IR as in \cite[Algorithm 1]{Iterative_refinement_sketching} where line 6 consists of a single iteration of  \cite[Algorithm 2]{Iterative_refinement_sketching}.
\end{enumerate}
All space-time methods run for $400$ iterations and we fix $s= 806$.
The regularization parameter for each method is hand-tuned to minimize
the RRE: $\lambda^*_{\mathrm{GKS}} = 1.90$,
$\lambda^*_{\text{sGKS}} = 6.81$,
$\lambda^*_{\text{sGKS-IR}} = 1.90$ (both $q = 0$ and $q = 1$),
and $\lambda^*_{\mathrm{OBO}} = 146.8$.
The OBO baseline requires a regularization parameter roughly two
orders of magnitude larger because, without temporal coupling, each
frame must rely entirely on spatial regularization to compensate for
the severe angular undersampling.
Both IR variants recover the same optimal~$\lambda^*$ as GKS,
confirming that even a single refinement step corrects the sketching
distortion in the projected solve.

Figure~\ref{fig:dyntomo_recon} displays selected reconstructed frames
(frames~$1$, $3$, $7$, and~$10$) for each method.
The space-time methods produce visually sharper reconstructions with
well-defined object boundaries.
The OBO reconstructions show visible artifacts of limited-angle tomography because the angular gaps in each
frame's measurement set leave certain spatial frequencies unobserved. GKS and all the sGKS variants produce very similar results, for all frames.

Figure~\ref{fig:dyntomo_conv} plots the RRE as a function of
iteration count.
The most striking feature is the clear separation between the
space-time methods and the OBO baseline: all four space--time solvers
converge to an RRE of approximately $0.41$, while OBO saturates
at $0.56$.
Among the space-time methods, GKS and both IR variants
follow essentially the same trajectory, confirming that iterative
refinement fully corrects the sketching error at every step.
sGKS without IR converges faster but stagnates to a slightly higher RRE ($0.43$), which
is expected since it operates at a different optimal~$\lambda^*$ and
the sketching distortion accumulates without correction.

Figure~\ref{fig:dyntomo_timing} provides the total wall-clock times
and a stacked-bar decomposition of the per-component costs. GKS requires $99$\,s, dominated by the QR factorization update on   the large space-time regularization operator~$\Psi$ and the double  modified Gram--Schmidt reorthogonalization.                       sGKS (no IR) is the fastest at $33$\,s, achieving a $3\times$ speedup by replacing both expensive operations with their sketched  counterparts. The IR variants are slower ($325$\,s and $473$\,s for $q = 0$ and $q = 1$ respectively), as each refinement step requires forming and solving the full projected normal equations. The OBO baseline takes $100$\,s (the sum over $10$ independent per-frame solves), comparable to the space-time methods but at significantly worse reconstruction quality.  

We now turn our attention to the impact that IR can have on the properties of the basis computed by sGKS; cf. the discussion in section~\ref{sec:analysis}.
Figure~\ref{fig:dyntomo_sv_crossgram}(a) compares the singular values
$\sigma_j(\bW_k)$ of the sGKS basis after $400$ iterations for the
three sGKS variants.
Without IR, the final basis $\bW_k$ computed by sGKS is numerically low-rank. It is interesting to notice that the rank of $\bW_k$ is approximately 150, that pretty much matches the iteration index in which the RRE provided by sGKS without IR starts stagnating (cf. Figure~\ref{fig:dyntomo_conv}). Unveiling a possible connection between these two elements would be interesting but also beyond the scope of this paper.  

With IR ($q = 0$ or $q = 1$), the singular values of $\bW_k$ remain close to
one throughout, confirming that the refinement step restores the
orthogonality of the basis to a certain extent.
This also explains why the IR variants match the GKS convergence
trajectory and select the same optimal~$\lambda^*$: the solution of the sketched projected problem
gets very close to the solution of the unsketched one after refining.

Finally, to assess how well the sGKS basis captures the
subspace generated by standard GKS, we compare the two sets of basis vectors.
Let $\bV_{\mathrm{GKS}}\in\mathbb{R}^{n\times d}$ denote the
orthonormal Krylov basis produced by GKS after $d=400$ iterations, and
let $\bW_{\mathrm{sGKS}}\in\mathbb{R}^{n\times d}$ denote the
(non-orthonormal) basis produced by sGKS with sketch dimension
$s=10{,}000$ and no IR.
We compute the economy QR factorization
$\bQ\bR = \mathrm{qr}(\bW_{\mathrm{sGKS}})$, obtaining an
orthonormal factor $\bQ$, and form the cross-Gram matrix
$|\bQ^\top\bV_{\mathrm{GKS}}|$ whose components are the absolute value of the cosine of the angle $\theta_{i,j}$ between the $i$th column of $\bQ$ and the $j$th column of $\bV_{\mathrm{GKS}}$. These values are reported in Figure~\ref{fig:dyntomo_sv_crossgram} (b) which shows a strong diagonal band, confirming that the sGKS basis vectors are well-aligned with the corresponding GKS vectors for the first ${\sim}250$ iterations. The near-identity structure in the upper-left block indicates that the the subspace generated by sGKS preserves the leading subspace directions of $\text{range}(\bV_{\mathrm{GKS}})$ almost exactly, regardless the lack of any orthogonalization. Beyond index ${\sim}250$, the diagonal broadens and
off-diagonal entries become visible, indicating a gradual loss of
alignment for later basis vectors. However, at this point, the RRE provided by both GKS and sGKS had stopped decreasing in a significant matter; cf. Figure~\ref{fig:dyntomo_conv}.

\begin{figure}[t!]
\centering
\begin{tabular}{cccc}
$t =1$ & \qquad \qquad \qquad $t =3$ & \qquad \qquad \qquad $t =7$ & \qquad \qquad \qquad $t =10$ \\
\end{tabular}
\begin{tabular}{cccc}
\includegraphics[height = 0.2\textwidth, width = .2\textwidth]{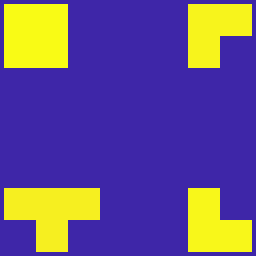} &
\includegraphics[height = 0.2\textwidth, width = .2\textwidth]{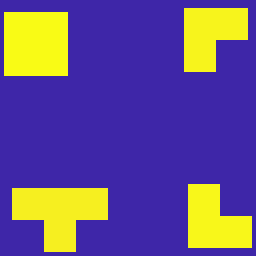} &
\includegraphics[height = 0.2\textwidth, width = .2\textwidth]{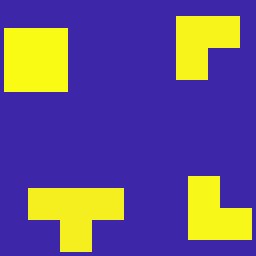} &
\includegraphics[height = 0.2\textwidth, width = .2\textwidth]{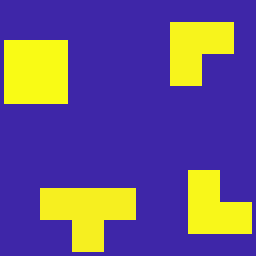}
\end{tabular}
\begin{tabular}{cccc}
\includegraphics[height = 0.05\textwidth, width = .2\textwidth]{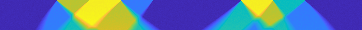} &
\includegraphics[height = 0.05\textwidth, width = .2\textwidth]{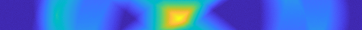} &
\includegraphics[height = 0.05\textwidth, width = .2\textwidth]{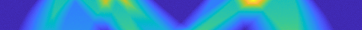} &
\includegraphics[height = 0.05\textwidth, width = .2\textwidth]{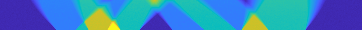}
\end{tabular}
\caption{Experiment 4 (Dynamic CT). First row: true images
($n_x = n_y = 256$) at time steps $t=1, 3, 7, 10$. Second row:
sinograms at the same time steps.}
\label{fig:dyntomo_true_sino}
\end{figure}

\begin{figure}[ht!]
\centering
\begin{tabular}{cccc}
$t =1$ & \qquad \qquad \qquad $t =3$ & \qquad \qquad \qquad $t =7$ & \qquad \qquad \qquad $t =10$ \\
\end{tabular}
\begin{tabular}{cccc}
\includegraphics[height = 0.2\textwidth, width = .2\textwidth]{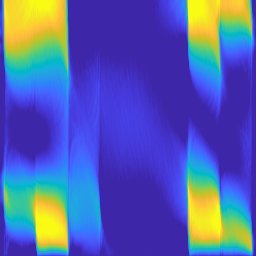} &
\includegraphics[height = 0.2\textwidth, width = .2\textwidth]{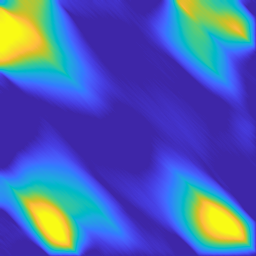} &
\includegraphics[height = 0.2\textwidth, width = .2\textwidth]{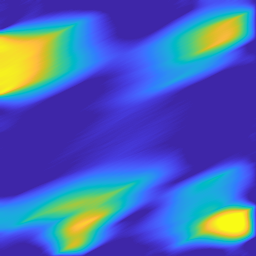} &
\includegraphics[height = 0.2\textwidth, width = .2\textwidth]{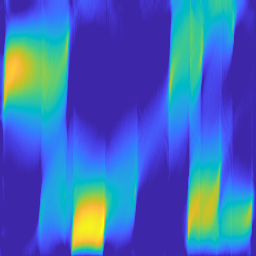}
\end{tabular}
\begin{tabular}{cccc}
\includegraphics[height = 0.2\textwidth, width = .2\textwidth]{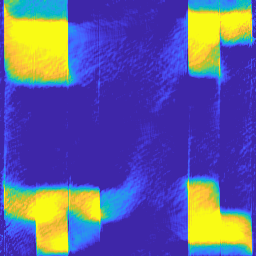} &
\includegraphics[height = 0.2\textwidth, width = .2\textwidth]{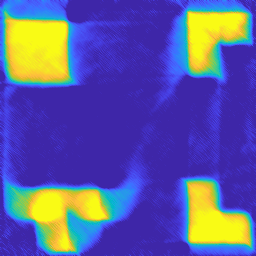} &
\includegraphics[height = 0.2\textwidth, width = .2\textwidth]{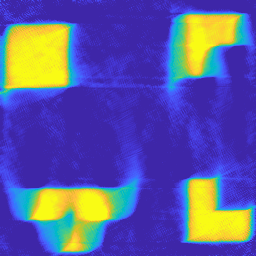} &
\includegraphics[height = 0.2\textwidth, width = .2\textwidth]{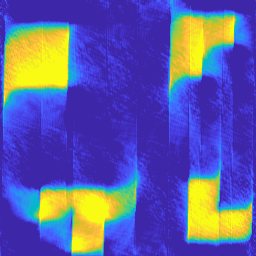}
\end{tabular}
\begin{tabular}{cccc}
\includegraphics[height = 0.2\textwidth, width = .2\textwidth]{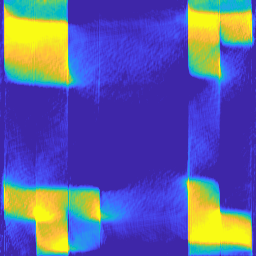} &
\includegraphics[height = 0.2\textwidth, width = .2\textwidth]{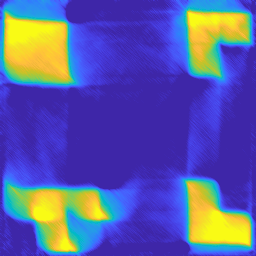} &
\includegraphics[height = 0.2\textwidth, width = .2\textwidth]{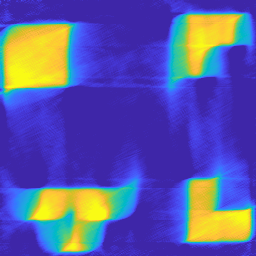} &
\includegraphics[height = 0.2\textwidth, width = .2\textwidth]{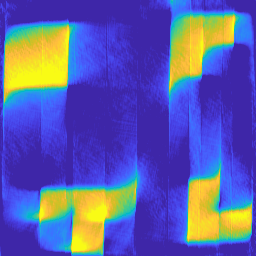}
\end{tabular}
\begin{tabular}{cccc}
\includegraphics[height = 0.2\textwidth, width = .2\textwidth]{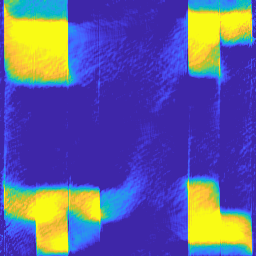} &
\includegraphics[height = 0.2\textwidth, width = .2\textwidth]{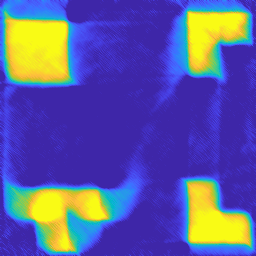} &
\includegraphics[height = 0.2\textwidth, width = .2\textwidth]{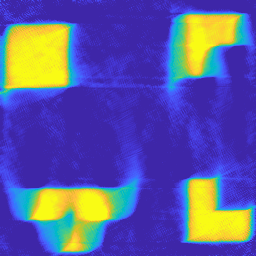} &
\includegraphics[height = 0.2\textwidth, width = .2\textwidth]{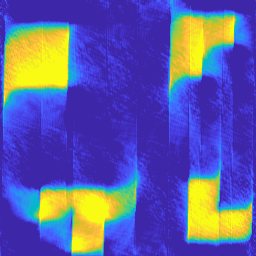}
\end{tabular}
\caption{Experiment 4 (Dynamic CT): reconstructed frames at time steps
$1, 3, 7, 10$. From top to bottom: OBO, GKS, sGKS (no IR),
sGKS (IR, $q=1$).}
\label{fig:dyntomo_recon}
\end{figure}

\begin{figure}[t]
  \centering
\includegraphics[width = \textwidth]{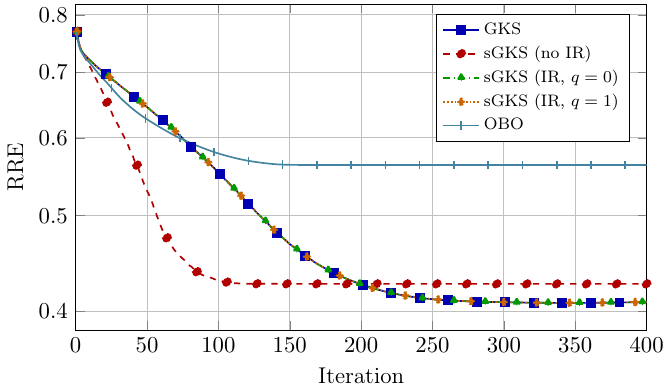}
  \caption{Experiment 4 (Dynamic CT,
           $256 \times 256 \times 10$ frames,
           $\sigma = 0.1\%$).
           RRE versus iteration at the optimal~$\lambda^*$
           for each method.
           }
  \label{fig:dyntomo_conv}
\end{figure}

\begin{figure}[t]
\centering
\includegraphics[width = \textwidth]{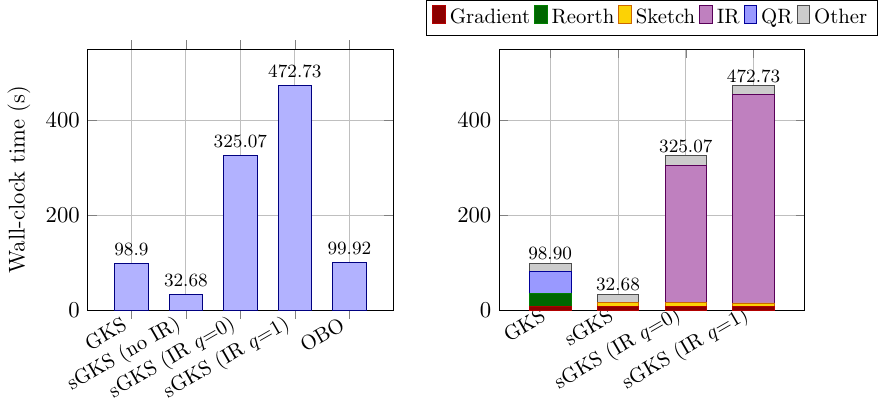}
\caption{Experiment 4 (Dynamic CT).
(a)~Total wall-clock time for $400$ iterations (all five methods including OBO).
(b)~Stacked-bar breakdown of the per-component costs for the four space-time methods.}
\label{fig:dyntomo_timing}
\end{figure}

\begin{figure}[t]                     \centering
\includegraphics[width = \textwidth]{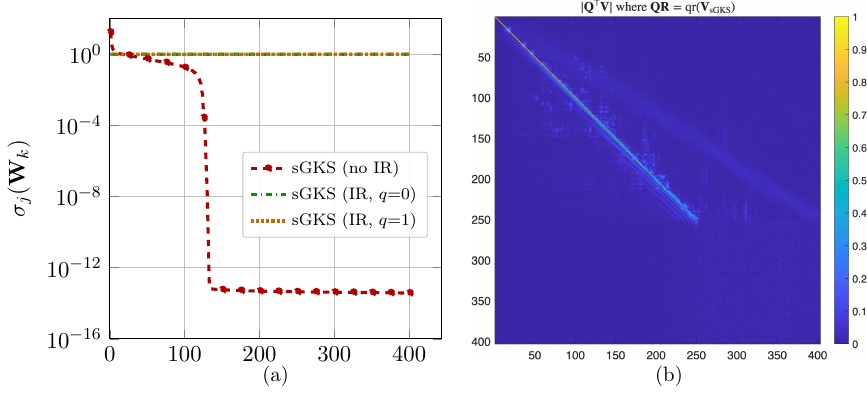}
\caption{Experiment 4 (Dynamic CT).           (a)~Singular values $\sigma_j(\mathbf{W}_k)$ of the sGKS basis
    after $k = 400$ iterations: without iterative refinement (IR) the           
    singular values decay to machine precision, while both IR variants           preserve near-unit values.                 (b)~Absolute cross-Gram matrix $|\mathbf{Q}^\top                           \mathbf{V}_{\mathrm{GKS}}|$,                  where $\mathbf{Q}\mathbf{R} = \mathrm{qr}(\mathbf{W}_{\mathrm{sGKS}})$   with $s = 10{,}000$; the dominant diagonal band confirms that the           
    leading sGKS basis vectors closely align with their GKS counterparts.} 
\label{fig:dyntomo_sv_crossgram}    \end{figure}
\section{Conclusions}\label{sec:conclusions}
We have presented sGKS, a sketched variant of the generalized Krylov subspace method for large-scale Tikhonov regularization with a general regularization matrix. The method completely skips any explicit orthogonalization of the basis and applies randomized dimensionality reduction to the thin QR factorizations of the projected matrices.

The sketched QR factorizations operate on compressed matrices whose row dimension equals the sketch size rather than the ambient dimensions of the forward operator and the regularization matrix, and are maintained incrementally via rank-one column appends, yielding substantial per-iteration savings. 

A central observation underlying our approach is that explicit reorthogonalization of the basis is not essential for the construction of effective approximation subspaces. Indeed, no step of the GKS iteration relies intrinsically on the orthogonality of the basis, in contrast to classical Krylov methods where the Arnoldi or Lanczos relation ties the projected problem directly to the orthogonalization process. Dropping reorthogonalization eliminates what is often the dominant cost per iteration without compromising the quality of the search space, provided that the newly computed gradient directions continue to enrich the subspace. 

The resulting algorithm is independent of the choice of the sketching operator and preserves the approximation quality of the original GKS method, as guaranteed by the oblivious subspace embedding property.

We have shown that, in the absence of sketching in the projected solve, the proposed method produces iterates that are mathematically identical to those of standard GKS. When the projected least-squares problem is solved in sketched form, the method delivers quasi-optimal residual norms whose deviation from the unsketched solution is controlled by the embedding quality of the sketching operators. Furthermore, we have observed that on more challenging problems where the loss of numerical rank in the basis becomes significant, iterative refinement of the projected solve restores the spectral properties of the basis and recovers the full accuracy of the unsketched method. This benefit, however, comes at a much higher computational cost.

Numerical experiments on image deblurring, X-ray computerized tomography, seismic travel-time tomography, and dynamic computerized tomography confirm that sGKS matches the reconstruction accuracy of the standard GKS method while significantly reducing both per-iteration cost and overall wall-clock time. 

Several directions remain open for future work. Adaptive selection of the sketch dimension during the iteration, rather than fixing it a priori, could further reduce overhead in the early iterations when the subspace dimension is small. Extending the framework to nonlinear inverse problems and to regularizers beyond quadratic penalties, such as those arising in mixed-norm and total variation formulations, is another natural direction. Finally, a distributed-memory implementation exploiting the reduced communication footprint of the sketched operations could enable the solution of truly large-scale three-dimensional problems.

\section*{Acknowledgments}
MP acknowledges support from the National Science Foundation (NSF) under
Grant No.\ DMS-2410699. Any opinions, findings, conclusions, or
recommendations expressed in this material are those of the authors and do not
necessarily reflect the views of the National Science Foundation.
Both authors acknowledge support from the NSF under Grant No. DMS-1929284 while the authors were in residence at the Institute for Computational and Experimental Research in Mathematics in Providence, RI, during the \emph{Stochastic and Randomized Algorithms in Scientific Computing: Foundations and Applications} semester program, where preliminary discussions of this work took place. 
\addcontentsline{toc}{section}{References}
\bibliographystyle{plainurl}
\bibliography{ref}

\end{document}